\newtheorem{theorem}{Theorem}\numberwithin{theorem}{section}
\newtheorem{lemma}[theorem]{Lemma}
\newtheorem{notation}[theorem]{Notation}
\newtheorem{question}[theorem]{Question}
\newtheorem{theoremm}{Theorem}\numberwithin{theoremm}{subsection}
\newtheorem{lemmma}[theoremm]{Lemma}
\newtheorem{propposition}[theoremm]{Proposition}
\numberwithin{theoremmm}{subsubsection}
\theoremstyle{remark}
\newtheorem{remmark}[theoremm]{Remark}
\theoremstyle{definition}
\newtheorem{deffinition}[theoremm]{Definition}
\newtheorem{definition}[theorem]{Definition}
\newcommand{\Rad}{\operatorname{Rad}}
\newcommand{\Aut}{\operatorname{Aut}}
\newcommand{\Alt}{\operatorname{Alt}}
\newcommand{\ord}{\operatorname{ord}}
\newcommand{\Sym}{\operatorname{Sym}}
\newcommand{\C}{\operatorname{C}}
\newcommand{\Inn}{\operatorname{Inn}}
\newcommand{\id}{\operatorname{id}}
\newcommand{\N}{\operatorname{N}}
\newcommand{\GL}{\operatorname{GL}}
\newcommand{\D}{\operatorname{D}}
\newcommand{\IN}{\mathbb{N}}
\newcommand{\perm}{\mathrm{perm}}
\newcommand{\Stab}{\operatorname{Stab}}
\newcommand{\IZ}{\mathbb{Z}}
\newcommand{\maol}{\operatorname{maol}}
\newcommand{\Exp}{\operatorname{Exp}}
\newcommand{\cent}{\mathrm{cent}}
\newcommand{\reg}{\mathrm{reg}}
\newcommand{\ffrak}{\mathfrak{f}}
\newcommand{\pow}{\mathrm{pow}}
\newcommand{\Ocal}{\mathcal{O}}
\begin{document}

\title{Finite transitive permutation groups with \\only small normaliser orbits}

\author{Alexander Bors and Michael Giudici\thanks{First author's address: Johann Radon Institute for Computational and Applied Mathematics (RICAM), Altenberger Stra{\ss}e 69, 4040 Linz, Austria. E-mail: \href{mailto:alexander.bors@ricam.oeaw.ac.at}{alexander.bors@ricam.oeaw.ac.at} \newline Second author's address: The University of Western Australia, Centre for the Mathematics of Symmetry and Computation, 35 Stirling Highway, Crawley 6009, WA, Australia. E-mail: \href{mailto:michael.giudici@uwa.edu.au}{michael.giudici@uwa.edu.au} \newline The first author is supported by the Austrian Science Fund (FWF), project J4072-N32 \enquote{Affine maps on finite groups}. The second author was supported by the Australian Research Council Discovery Project DP200101951.  This research arose out of the annual CMSC research retreat and the authors thank the other participants for making it such an enjoyable event.  \newline 2020 \emph{Mathematics Subject Classification}: Primary 20B10. Secondary 20D45, 20F28. \newline \emph{Key words and phrases:} Transitive permutation group, Normaliser, Normaliser orbit.}}

\date{\today}

\maketitle

\abstract{We study finite transitive permutation groups $G\leqslant\Sym(\Omega)$ such that all orbits of the conjugation action on $G$ of the normaliser of $G$ in $\Sym(\Omega)$ have size bounded by some constant. Our results extend recent results, due to the first author, on finite abstract groups $G$ such that all orbits of the natural action of the automorphism group $\Aut(G)$ on $G$ have size bounded by some constant.}

\section{Introduction and main results}\label{sec1}

One of the fundamental concepts in the study of \emph{structures} (i.e., sets endowed with additional structure in the form of operations and relations) is that of an automorphism, the formalisation of the intuitive notion of a \enquote{symmetry}. A significant portion of research across various disciplines is concerned with studying \enquote{highly symmetrical} structures $X$, a condition usually expressed in terms of certain transitivity assumptions on natural actions of the automorphism group $\Aut(X)$. For example, the well-studied notions of vertex-transitive graphs \cite[Definition 4.2.2, p.~85]{BW79a}, block- or flag-transitive designs \cite{CP93a,CP93b,Hub09a} and finite flag-transitive projective planes \cite{Tha03a} fall into this general framework.

In the special case where $X$ is a group $G$, the situation is more complicated, as the most straightforward transitivity assumption, that $\Aut(G)$ shall act transitively on $G$, is only satisfied by the trivial group. Hence, in order to obtain an interesting theory, weaker conditions have been studied by various authors. As examples, we mention
\begin{itemize}
\item the papers \cite{BD16a,DGB17a,LM86a,Stro02a} by various authors on finite groups $G$ such that $\Aut(G)$ has \enquote{few} orbits on $G$ (and the recent paper \cite{BDM20a} studying such a condition for infinite groups),
\item the first author's paper \cite{Bor19a} dealing with finite groups $G$ such that $\Aut(G)$ has at least one \enquote{large} orbit on $G$, and
\item Zhang's paper \cite{Zha92a} investigating finite groups $G$ where all elements of the same order are conjugate under $\Aut(G)$ (which were later discovered to have a connection with the celebrated CI-problem from algebraic graph theory, see \cite[Introduction]{LP97a}).
\end{itemize}
In his recent paper \cite{Bor20a}, the first author studied finite groups that are at the opposite end of the spectrum, i.e., they are \enquote{highly unsymmetrical}. Formally, the paper \cite{Bor20a} is concerned with finite groups $G$ such that all orbits of $\Aut(G)$ on $G$ have size bounded by a constant. It should be noted that Robinson and Wiegold already studied such conditions for general groups in their 1984 paper \cite{RW84a}, obtaining \emph{inter alia} a nice general structural characterisation \cite[Theorem 1]{RW84a} in the spirit of Neumann's celebrated characterisation of BFC-groups \cite[Theorem 3.1]{Neu54a}. However, the methods and results from \cite{Bor20a} are tailored to the finite case, where more specific statements can be made.

There are many instances where the full automorphism group $\Aut(G)$ is not \enquote{accessible}.  One notable case is where $G\leqslant\Sym(\Omega)$ is a permutation group. Here it is natural to only view those automorphisms of $G$ that arise from conjugation by an element from the normaliser of $G$ in $\Sym(\Omega)$. Formally, let us denote by $\Aut_{\perm}(G)$ the image of the conjugation action $\N_{\Sym(\Omega)}(G)\rightarrow\Aut(G)$.  It is then natural to ask which results about $\Aut(G)$ can be extended to $\Aut_{\perm}(G)$. This group of automorphisms has been previously considered as a means for computing the normaliser of $G$ in $\Sym(\Omega)$ \cite{Hul08}.

The goal of this paper is to extend the main results of \cite{Bor20a} to $\Aut_{\perm}(G)$. By a classical idea, dating back to Cayley, abstract groups are in a bijective correspondence with regular permutation groups via their (right) multiplication actions on themselves. Moreover, if $G$ is a regular permutation group, then $\Aut_{\perm}(G)=\Aut(G)$ (see Lemma \ref{regularAutLem} below), and so the statements of \cite[Theorem 1.1]{Bor20a} may equivalently be viewed as results on finite regular permutation groups $G$ such that all $\Aut_{\perm}(G)$-orbits on $G$ have size bounded by a constant. In this paper, we will extend \cite[Theorem 1.1]{Bor20a} from finite regular to finite transitive permutation groups. Our main results are as follows:

\begin{theorem}\label{mainTheo}
Let $G\leqslant\Sym(\Omega)$ be a finite transitive permutation group.
\begin{enumerate}
\item The following are equivalent:
\begin{enumerate}
\item All $\Aut_{\perm}(G)$-orbits on $G$ are of length at most $3$.
\item Up to isomorphism of permutation groups, $G$ is one of the following:
\begin{itemize}
\item $\IZ/m\IZ$ for some $m\in\{1,2,3,4,6\}$, $(\IZ/2\IZ)^2$ or $\Sym(3)$, each in its regular action on itself.
\item $\D_{2n}\leqslant\Sym(n)$, the symmetry group of a regular $n$-gon, for some $n\in\{3,4,6\}$.
\end{itemize}
\end{enumerate}
\item The order of $G$ cannot be bounded under the assumption that the maximum $\Aut_{\perm}(G)$-orbit length on $G$ is $4$.
\item Let $c$ be a positive and $d$ a non-negative integer. Assume that $G$ is $d$-generated and that all $\Aut_{\perm}(G)$-orbits on $G$ are of length at most $c$. Then $|G|\leqslant\ffrak(d,c^d)$, where $\ffrak$ is as in Notation \ref{mainNot} below.
\item If all $\Aut_{\perm}(G)$-orbits on $G$ are of length at most $23$, then $G$ is soluble.
\end{enumerate}
\end{theorem}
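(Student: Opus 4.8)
The plan is to argue the contrapositive: assuming $G$ is insoluble, I would exhibit an $\Aut_{\perm}(G)$-orbit of length at least $24$. The starting point is that $\Inn(G)\leqslant\Aut_{\perm}(G)$, since $G\leqslant\N_{\Sym(\Omega)}(G)$ acts by conjugation; hence every $\Aut_{\perm}(G)$-orbit is a union of $G$-conjugacy classes, and the hypothesis forces every conjugacy class of $G$ to have size at most $23$. As conjugacy class sizes can only shrink on passing to subgroups and to quotients, every composition factor of $G$ then has all its classes of size at most $23$. A check against the nonabelian simple groups (using known lower bounds for class sizes, e.g.\ $\PSL(2,7)$ already has a class of size $56$) shows that $\Alt(5)$ is the only one with this property. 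So insolubility yields $\Alt(5)$ as a composition factor, and in fact as a component: a quasisimple subnormal subgroup $L$ with $L/\operatorname{Z}(L)\cong\Alt(5)$ is either $\Alt(5)$ or its double cover $\operatorname{SL}(2,5)$, and the latter is excluded because it contains a class of size $30>23$. That $\E(G)\neq 1$ at all (rather than an $\Alt(5)$-section acting faithfully on $\F(G)$) follows from the same bound, since such an action produces a conjugacy class longer than $23$. Thus $\E(G)$ is a direct product of copies of $\Alt(5)$.

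Next I would pin down how $G$ sits around such a component $L\cong\Alt(5)$. The two classes of $5$-cycles of $\Alt(5)$, each of size $12$, are interchanged by its outer automorphism and fused by $\Sym(5)=\Aut(\Alt(5))$. Consequently, if any element of $\N_G(L)$ induced an outer automorphism of $L$, the $G$-class of a $5$-cycle of $L$ would meet $L$ in both classes and so have size at least $24$, a contradiction. Therefore $G$ induces only inner automorphisms on each component, i.e.\ $\N_G(L)/\C_G(L)\cong\Inn(L)\cong\Alt(5)$, and the two $5$-cycle classes of $L$ remain unfused under $G$-conjugacy. The candidate large orbit is precisely their union of $12+12=24$ elements: it suffices to show that $\Aut_{\perm}(G)$ fuses the two $5$-cycle classes of some component, equivalently that $\N_{\Sym(\Omega)}(G)$ induces an outer automorphism of $L$.

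To attack this I would use a description of $\Aut_{\perm}(G)$ for transitive $G$ in terms of a point stabiliser $H=G_\omega$: up to an inner twist, the automorphisms induced by the normaliser are exactly those fixing $H$ setwise, giving $\Aut_{\perm}(G)=\Inn(G)\cdot\N_{\Aut(G)}(H)$ (this is the mechanism underlying \cite{Hul08}, and specialises for $H=1$ to Lemma~\ref{regularAutLem}). Since $\Inn(G)$ already induces only inner automorphisms on $L$, the task becomes: show $\N_{\Aut(G)}(H)$ contains an automorphism inducing the outer automorphism of $L$. I would first settle the base case $G=\Alt(5)$: a short case analysis over the conjugacy classes of proper subgroups $H\leqslant\Alt(5)$ (of orders $1,2,3,4,5,6,10,12$) shows each is normalised by some odd permutation in $\Sym(5)$, so $\N_{\Sym(5)}(H)\not\leqslant\Alt(5)$ and the outer automorphism is always realised; hence every transitive action of $\Alt(5)$ already has an $\Aut_{\perm}$-orbit of length $24$. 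For general $G$ I would reduce to this by factoring out $\C_G(L)$ (and, after dealing with the action of the normaliser permuting the set of components, the soluble radical), obtaining an induced transitive action of $L\cong\Alt(5)$ on the block system of $\C_G(L)$-orbits, to which the base case applies; the outer automorphism produced there pulls back to one of $G$ inducing the outer automorphism on $L$.

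The hard part will be exactly this last reduction. Unlike the abstract setting of \cite{Bor20a}, the group $\Aut_{\perm}(G)$ can be strictly smaller than $\Aut(G)$ and is acutely sensitive to the point stabiliser $H$, so one cannot simply invoke the fusion present in $\Aut(G)$. The crux is to prove that $H$ can never be so positioned relative to $L$ that every automorphism normalising $H$ induces an inner automorphism of $L$; the pure $\Alt(5)$ calculation is the prototype of the required statement, and the technical work lies in transporting it faithfully through the centraliser quotient and through the permutation action of $\N_{\Sym(\Omega)}(G)$ on the $\Alt(5)$-components.
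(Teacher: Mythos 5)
Your proposal only addresses part (4) of the theorem, and for that part the opening moves are sound and agree with the paper's: from $\Inn(G)\leqslant\Aut_{\perm}(G)$ one gets all conjugacy classes of length at most $23$, which (via the arguments of \cite[Section 6]{Bor20a}, which the paper simply imports) forces $G=\zeta G\times G'$ with $G'\cong\Alt(5)$ and $\zeta G=\Rad(G)$; your identification of $\Alt(5)$ as the only admissible composition factor and your base-case computation that every core-free subgroup of $\Alt(5)$ is normalised by an odd permutation (so that $\Alt(5)$ in any transitive action has the orbit of $24$ five-cycles) are both correct. However, there is a genuine gap exactly where you flag "the hard part": the reduction of the general case to the base case does not work as described. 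An automorphism of the quotient permutation group induced on the block system of $\C_G(L)$-orbits, realised by a permutation of the \emph{blocks}, does not automatically lift to an element of $\N_{\Sym(\Omega)}(G)$; and, more decisively, preserving the image of $G_{\omega}$ in $L$ up to conjugacy is strictly weaker than preserving $G_{\omega}$ itself up to conjugacy, which is what Lemma \ref{autPermLem} actually requires. In the critical cases the point stabiliser is a diagonally embedded subgroup of $\zeta G\times\Alt(5)$ (e.g.\ $\langle(\overline{0},(1,2,3)),(\overline{1},(2,3)(4,5))\rangle$), not a subgroup of $L$, so the pure $\Alt(5)$ calculation does not transport. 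Your strategy is also narrower than necessary: you aim always to fuse the two $5$-cycle classes, whereas in most of the surviving configurations the paper's orbit of length $>23$ arises differently, namely from the class of length $20$ (or the $5$-cycles) combined with a nontrivial orbit of $\C_{\Aut(\zeta G)}(P)$ or $\Aut(\zeta G)$ on the central factor, where $P$ is the projection of $G_{\omega}$ to $\zeta G$.

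What the paper does instead, after reaching $G=\zeta G\times\Alt(5)$, is to study the projection $P$ of $G_{\omega}$ to $\zeta G$ and establish three facts: $\C_{\Aut(\zeta G)}(P)=1$ (else an orbit of length $2\cdot20=40$), $P\neq 1$ (else $G_{\omega}\leqslant G'$ and all of $\Aut(\Alt(5))$ is realised, giving $24$), and $P$ is an abelian quotient of $G_{\omega}$. Combined with Lemma \ref{centAbelianLem} this pins $\zeta G$ down to a handful of small cyclic or elementary abelian groups for each possible isomorphism type of $G_{\omega}\leqslant\Alt(5)$, leaving $11$ explicit pairs $(G,G_{\omega})$ that are then checked by computer, each having $\maol_{\perm}(G)\geqslant 24$. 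If you want to complete your route without computation, you would need to prove directly, for each diagonal point stabiliser in $\zeta G\times\Alt(5)$, that some automorphism acting as an outer automorphism on the $\Alt(5)$ factor (possibly combined with an automorphism of $\zeta G$) maps $G_{\omega}$ to a $G$-conjugate, or else exhibit a different orbit of length at least $24$; that is precisely the content the block-system reduction fails to supply.
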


\begin{notation}\label{mainNot}
We denote by $\ffrak$ the function $\IN\times\IN^+\rightarrow\IN^+$ mapping
\[
(d,n)\mapsto 16^{(n+1)d}\cdot n^{2n^3d^2(5+d+4n^3\log_2{n})+2n^3+4nd+4d}.
\]
\end{notation}

The combination of statements (1) and (2) of Theorem \ref{mainTheo} is particularly interesting when compared to their counterparts in \cite[Theorem 1.1]{Bor20a}: At the moment, for finite abstract groups $G$, it is unknown what is the precise value of the maximum integer $c$ such that there are only finitely many $G$ with all $\Aut(G)$-orbits of length at most $c$ (we only know that $c\in\{3,4,5,6,7\}$). We also note that 23 in part (4) is sharp as $G=\Alt(5)$ in its usual action on five points has an $\Aut_{\perm}(G)$-orbit of length 24. 

We note that the proof of Theorem \ref{mainTheo}(3) uses a different, much simpler main idea than the one of \cite[Theorem 1.1(3)]{Bor20a}: Under the assumptions of Theorem \ref{mainTheo}(3), one has that $|\Aut_{\perm}(G)|\leqslant c^d$, which implies the asserted upper bound on $|G|$ by a partial generalisation of a classical theorem of Ledermann and Neumann, \cite[Theorem 6.6]{LN56a}, from abstract to transitive permutation groups, see Lemma \ref{ledNeuLem}. The same idea works in the case of abstract groups, where a direct application of \cite[Theorem 6.6]{LN56a} yields the following simpler and stronger bound compared to \cite[Theorem 1.1(3)]{Bor20a}:

\begin{theorem}\label{improvedBoundTheo}
Let $G$ be a finite abstract group. Assume that $G$ is $d$-generated and that all $\Aut(G)$-orbits on $G$ are of length at most $c$. Then
\[
|G|\leqslant c^{d\cdot c^d\cdot(1+\lfloor d\log_2{c}\rfloor)}+1.
\]
\end{theorem}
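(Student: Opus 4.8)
The plan is to bound $|G|$ in two stages: first estimate $|\Aut(G)|$ purely from the hypotheses, and then feed this into the classical order bound of Ledermann and Neumann. For the first stage, fix a generating set $g_1,\dots,g_d$ of $G$. Any $\varphi\in\Aut(G)$ is completely determined by the tuple $(\varphi(g_1),\dots,\varphi(g_d))$, so the evaluation map $\Aut(G)\to G^d$, $\varphi\mapsto(\varphi(g_1),\dots,\varphi(g_d))$, is injective. Moreover $\varphi(g_i)$ always lies in the $\Aut(G)$-orbit of $g_i$, which by hypothesis has size at most $c$. Hence the image of this injection is contained in the product of the $d$ orbits $\Aut(G)\cdot g_i$, giving $|\Aut(G)|\leqslant c^d$.

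For the second stage I would invoke \cite[Theorem 6.6]{LN56a}, which bounds the order of a finite group $H$ in terms of $a:=|\Aut(H)|$; writing this bound as $f(a)$, I expect it to take the form $f(a)=a^{a(1+\lfloor\log_2 a\rfloor)}+1$. Since the base $a$ and the exponent $a(1+\lfloor\log_2 a\rfloor)$ are both non-decreasing in $a$ (and the base is $\geqslant 1$), the function $f$ is monotonically non-decreasing, so the estimate $|\Aut(G)|\leqslant c^d$ from the first stage yields $|G|\leqslant f(c^d)$. It then remains to simplify $f(c^d)$: setting $a=c^d$ one has $\log_2 a=d\log_2 c$, hence $1+\lfloor\log_2 a\rfloor=1+\lfloor d\log_2 c\rfloor$, while $a^{a}=c^{\,d c^d}$. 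Using $a^{a(1+\lfloor\log_2 a\rfloor)}=(a^a)^{1+\lfloor\log_2 a\rfloor}$ and substituting these into $f(a)$ gives exactly $c^{\,d\cdot c^d\cdot(1+\lfloor d\log_2 c\rfloor)}+1$, the claimed bound.

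I do not anticipate a deep obstacle here, since the substantial work is outsourced to the Ledermann--Neumann theorem; the only delicate point is matching the precise shape of \cite[Theorem 6.6]{LN56a} to the target expression and confirming that its statement remains valid in the degenerate low-order cases. In particular, when $c=1$ every $\Aut(G)$-orbit is a singleton, so $\Aut(G)$ acts trivially and $G\in\{1,\IZ/2\IZ\}$; here the power term equals $1$ and is too small on its own, and it is precisely the additive $+1$ in the Ledermann--Neumann bound that accounts for $|G|=2$. Verifying that this $+1$ correctly absorbs all such small cases (groups with trivial or very small automorphism group) is the one place where I would read the statement of \cite[Theorem 6.6]{LN56a} carefully rather than treat it as a black box.
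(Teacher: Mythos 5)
Your proposal is correct and follows essentially the same route as the paper: bound $|\Aut(G)|\leqslant c^d$ via the semiregular action of $\Aut(G)$ on generating $d$-tuples, then apply the Ledermann--Neumann order bound with $n=c^d$. The only cosmetic difference is that the paper does not quote \cite[Theorem 6.6]{LN56a} in the exact form $n^{n(1+\lfloor\log_2 n\rfloor)}+1$ you anticipate, but first derives that shape as Lemma \ref{lnLem} by a short contrapositive argument from the original statement (which is phrased as: $|G|\geqslant f(n)$ forces $|\Aut(G)|\geqslant n$).
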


Unfortunately, as we will explain in Section \ref{sec5}, it seems that Ledermann and Neumann's proof of \cite[Theorem 6.6]{LN56a} cannot be adapted to transitive permutation groups, requiring us to use Sambale's recent proof from \cite{Sam19a} instead. Sambale's proof is conceptually simpler than Ledermann and Neumann's argument, but it produces worse explicit bounds, which explains the stark contrast between the bounds in Theorems \ref{mainTheo}(3) and \ref{improvedBoundTheo}.

\section{Preliminaries}\label{sec2}

\subsection{Notation and terminology}\label{subsec2P1}

We denote by $\IN$ the set of natural numbers (including $0$) and by $\IN^+$ the set of positive integers. The symbol $\phi$ denotes Euler's totient function.

As in \cite{Bor20a}, for a finite abstract group $G$, we denote by $\maol(G)$ the maximum length of an orbit of the natural action of the automorphism group $\Aut(G)$ on $G$. On the other hand, if $G\leqslant\Sym(\Omega)$ is a permutation group, then as in Section \ref{sec1}, we denote by $\Aut_{\perm}(G)$ the group of all automorphisms of $G$ that are induced through conjugation by some element from the normaliser $\N_{\Sym(\Omega)}(G)$, and if $G$ is finite, we denote by $\maol_{\perm}(G)$ the maximum length of an orbit of the natural action of $\Aut_{\perm}(G)$ on $G$. Note that we may also view $G$ as an abstract group, so that the notation $\Aut(G)$ is well-defined (as is $\maol(G)$ if $G$ is finite), and we have $\Aut_{\perm}(G)\leqslant\Aut(G)$, as well as $\maol_{\perm}(G)\leqslant\maol(G)$ if $G$ is finite.

If $G$ is an abstract group, we denote by $G_{\reg}\leqslant\Sym(G)$ the image of the \mbox{(right-)}regular permutation representation of $G$ on itself. The minimum number of generators of a finitely generated group $G$ will be denoted by $d(G)$ and called the \emph{rank of $G$}. The exponent (least common multiple of the element orders) of a finite group $G$ will be denoted by $\Exp(G)$. The soluble radical (largest soluble normal subgroup) of a finite group $G$ will be denoted by $\Rad(G)$, and the centre of $G$ by $\zeta G$.

\subsection{Some basic results}\label{subsec2P2}

In this subsection, we collect some auxiliary results (most if not all of which are well-known) that will be used in the proofs of the main results. The following three lemmas deal with transitive permutation groups. We include the first without proof.

\begin{lemmma}\label{transAbLem}
Let $G$ be a transitive permutation group, and let $S$ be a point stabiliser in $G$. The following hold:
\begin{enumerate}
\item $S$ is \emph{core-free in $G$}, i.e., $S$ does not contain any nontrivial normal subgroup of $G$.
\item If $G$ is abelian, then $G$ is regular.
\end{enumerate}
\end{lemmma}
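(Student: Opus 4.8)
The plan is to handle both parts through the standard relationship, in a transitive action, between a point stabiliser and its conjugates. Throughout I write $S = G_\omega$ for a fixed base point $\omega \in \Omega$.

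For part (1), I would first recall that the core $\Core_G(S) = \bigcap_{g\in G} S^g$ is by definition the largest normal subgroup of $G$ contained in $S$; consequently any normal subgroup of $G$ lying inside $S$ lies inside the core, and it suffices to prove this intersection is trivial. The key observation is that for every $g \in G$ one has $S^g = (G_\omega)^g = G_{\omega^g}$, the stabiliser of the image point. By transitivity, as $g$ ranges over $G$ the point $\omega^g$ ranges over all of $\Omega$, so the conjugates $S^g$ are precisely the point stabilisers $G_\alpha$, $\alpha \in \Omega$. Hence $\Core_G(S) = \bigcap_{\alpha\in\Omega} G_\alpha$, and any element of this intersection fixes every point of $\Omega$ and is therefore the identity permutation. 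This yields $\Core_G(S) = \{1\}$ and establishes part (1).

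For part (2), I would use that a transitive group is regular if and only if its point stabilisers are trivial, so it is enough to show $S = \{1\}$. When $G$ is abelian every subgroup is normal in $G$; in particular $S$ is a normal subgroup of $G$ contained in itself. Applying part (1) then forces $S = \{1\}$, whence $G$ is regular.

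Since both assertions are elementary, there is no genuine obstacle. The only point requiring care is the bookkeeping in part (1) that justifies replacing the core by the intersection of \emph{all} point stabilisers: this is precisely the step where transitivity is used, via the identity $(G_\omega)^g = G_{\omega^g}$. The remainder is a direct application of the definitions.
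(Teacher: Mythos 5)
Your proof is correct. The paper states this lemma explicitly \emph{without} proof (it is treated as well known), and your argument is precisely the standard one: in a transitive group the conjugates $(G_\omega)^g = G_{\omega^g}$ exhaust all point stabilisers, so the core of $S$ is the pointwise stabiliser of $\Omega$, which is trivial by faithfulness; and in the abelian case $S$ is normal, hence trivial by part (1), giving regularity.
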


\begin{lemmma}\label{autPermLem}
Let $G$ be a transitive permutation group, and let $\alpha\in\Aut(G)$. The following are equivalent:
\begin{enumerate}
\item $\alpha\in\Aut_{\perm}(G)$.
\item For some (equivalently, any) point stabiliser $S$ in $G$, we have that $S^{\alpha}$ is also a point stabiliser in $G$.
\end{enumerate}
\end{lemmma}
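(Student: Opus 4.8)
The plan is to prove the two implications (1) $\Rightarrow$ (2) and (2) $\Rightarrow$ (1) in a way that also settles the parenthetical \enquote{some/any} claim automatically, via the cycle: if (2) holds for a single point stabiliser then (1) holds, and if (1) holds then (2) holds for every point stabiliser. Throughout I would fix a base point $\omega_0\in\Omega$, write $S=G_{\omega_0}$ for its stabiliser, let $G$ act on the right, and record the two standing facts I will lean on: first, the point stabilisers of $G$ are exactly the $G$-conjugates of $S$, since $G_{\omega_0^g}=S^g$ by transitivity; and second, $G\leqslant\N_{\Sym(\Omega)}(G)$, so every inner automorphism of $G$ lies in $\Aut_{\perm}(G)$.

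For (1) $\Rightarrow$ (2), suppose $\alpha$ is conjugation by some $n\in\N_{\Sym(\Omega)}(G)$, say $g^{\alpha}=n^{-1}gn$. I would compute directly that for any point $\omega$ one has $(G_{\omega})^{\alpha}=G_{\omega^n}$: if $s$ fixes $\omega$, then $(\omega^n)^{s^{\alpha}}=\omega^{ns^{\alpha}}=\omega^{sn}=(\omega^s)^n=\omega^n$, giving $(G_{\omega})^{\alpha}\subseteq G_{\omega^n}$, and equality follows by comparing orders (both stabilisers have order $|G|/|\Omega|$ and $\alpha$ preserves cardinality). In particular $S^{\alpha}$ is a point stabiliser, and the same computation shows this holds for every point stabiliser at once.

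The substance of the argument is (2) $\Rightarrow$ (1), where I would construct the normalising permutation explicitly from the coset structure. Assume $S^{\alpha}$ is a point stabiliser; writing $S^{\alpha}=G_{\omega_0^t}=S^t$ for a suitable $t\in G$, I define a map $n\colon\Omega\to\Omega$ by $(\omega_0^g)^n=\omega_0^{t\alpha(g)}$. The key point, and the only place where the hypothesis is used, is checking that $n$ is well defined: if $\omega_0^{g_1}=\omega_0^{g_2}$ then $g_1g_2^{-1}\in S$, whence $\alpha(g_1)\alpha(g_2)^{-1}\in S^{\alpha}=S^t$, which rearranges to $t\alpha(g_1)\bigl(t\alpha(g_2)\bigr)^{-1}\in S$, i.e.\ $\omega_0^{t\alpha(g_1)}=\omega_0^{t\alpha(g_2)}$. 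Bijectivity of $n$ is clear (its inverse is built from $\alpha^{-1}$). I would then verify by a short right-coset computation that for every $h\in G$ and every $g$, $(\omega_0^g)^{n^{-1}hn}=\omega_0^{g\alpha(h)}=(\omega_0^g)^{\alpha(h)}$, so that $n^{-1}hn=\alpha(h)$ as permutations; since $\alpha(h)\in G$ this simultaneously shows $n\in\N_{\Sym(\Omega)}(G)$ and that the conjugation it induces is exactly $\alpha$, giving $\alpha\in\Aut_{\perm}(G)$.

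Finally, the \enquote{some/any} equivalence drops out of the cycle: if (2) holds for one point stabiliser, then (1) holds by the construction, and then the formula $(G_{\omega})^{\alpha}=G_{\omega^n}$ from the first implication shows (2) holds for every point stabiliser. I expect the only delicate step to be the well-definedness check in the construction, which is precisely where the requirement that $S^{\alpha}$ be a genuine point stabiliser (rather than merely an abstract subgroup of the right order) is indispensable; the remainder is bookkeeping with the right-coset action.
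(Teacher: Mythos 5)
Your argument is correct, but it is worth noting that the paper does not actually prove this lemma at all: it simply cites \cite[Theorem 4.2B, p.~110]{DM96a}. What you have written is, in effect, a self-contained proof of the content of that reference, namely the standard correspondence between a transitive $G$-space and the coset space of a point stabiliser: conjugation by an element of $\N_{\Sym(\Omega)}(G)$ permutes the point stabilisers, and conversely an automorphism sending $S$ to a point stabiliser $S^t$ can be realised by the explicit permutation $\omega_0^g\mapsto\omega_0^{t\alpha(g)}$, whose well-definedness is exactly where the hypothesis enters. All the computations check out: the containment $(G_\omega)^\alpha\subseteq G_{\omega^n}$, the well-definedness of $n$, the identity $n^{-1}hn=\alpha(h)$ on every point, and the some/any cycle. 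The one small blemish is in the direction (1)~$\Rightarrow$~(2): you obtain the reverse inclusion $(G_\omega)^\alpha\supseteq G_{\omega^n}$ by comparing orders $|G|/|\Omega|$, which tacitly assumes $G$ is finite, whereas the lemma is stated for arbitrary transitive permutation groups (and the paper does pose infinite-group questions elsewhere). This is immediately repaired by applying the same containment to $\alpha^{-1}$ and $n^{-1}$, giving $G_{\omega^n}=(G_{(\omega^n)^{n^{-1}}})^{\alpha}=(G_\omega)^\alpha$ with no counting. With that one-line adjustment your proof is complete and arguably more informative than the paper's bare citation.
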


\begin{proof}
See \cite[Theorem 4.2B, p.~110]{DM96a}.
\end{proof}

\begin{lemmma}\label{regularAutLem}
Let $G$ be a regular permutation group. Then $\Aut_{\perm}(G)=\Aut(G)$. In particular, if $G$ is finite, then $\maol_{\perm}(G)=\maol(G)$.
\end{lemmma}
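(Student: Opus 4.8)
The plan is to establish the nontrivial inclusion $\Aut(G)\subseteq\Aut_{\perm}(G)$, since the reverse inclusion $\Aut_{\perm}(G)\leqslant\Aut(G)$ holds for every permutation group and was already recorded in Subsection \ref{subsec2P1}. The cleanest route is to invoke the characterisation of $\Aut_{\perm}(G)$ from Lemma \ref{autPermLem}. Because $G$ is regular, it is transitive with $|G|=|\Omega|$, so by orbit--stabiliser every point stabiliser $S$ has order $|G|/|\Omega|=1$; conversely, the trivial subgroup is indeed the stabiliser of each point. Hence for any $\alpha\in\Aut(G)$ we have $S^{\alpha}=\{1\}^{\alpha}=\{1\}=S$, which is again a point stabiliser, so condition (2) of Lemma \ref{autPermLem} is satisfied and $\alpha\in\Aut_{\perm}(G)$. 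This yields the desired inclusion, and thus the equality.

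For readers who prefer an explicit realisation, I would also record the direct computation. Identify $\Omega$ with $G$ so that $G=G_{\reg}$ acts by right translations $\rho_g\colon x\mapsto xg$. Viewing an arbitrary $\alpha\in\Aut(G)$ as an element of $\Sym(G)$, a one-line check gives $\alpha^{-1}\rho_g\alpha=\rho_{g^{\alpha}}$ for all $g\in G$; thus $\alpha$ normalises $G_{\reg}$, and the automorphism it induces on $G_{\reg}$ by conjugation is precisely $\alpha$ itself. This exhibits $\alpha$ as an element of $\Aut_{\perm}(G)$ and simultaneously recovers the classical fact that $\N_{\Sym(G)}(G_{\reg})=\Hol(G)=G_{\reg}\rtimes\Aut(G)$.

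The ``in particular'' clause is then immediate: when $G$ is finite, the identity $\Aut_{\perm}(G)=\Aut(G)$ means the two groups act identically on $G$, so their orbits coincide, and in particular $\maol_{\perm}(G)=\maol(G)$.

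I do not expect any genuine obstacle here; the content is essentially a bookkeeping consequence of regularity via Lemma \ref{autPermLem}. The only point demanding care is the convention-dependent conjugation identity in the holomorph computation (obtaining $g^{\alpha}$ rather than $g^{\alpha^{-1}}$ on the right-hand side), which is why I would lean on Lemma \ref{autPermLem} as the primary argument and relegate the explicit calculation to a supporting remark.
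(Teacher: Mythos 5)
Your proposal is correct and follows the paper's own route: the paper simply cites Lemma \ref{autPermLem}, and your argument spells out exactly why that lemma applies (point stabilisers in a regular group are trivial, so every automorphism trivially preserves them). The supplementary holomorph computation is a fine remark but adds nothing needed beyond the primary argument.
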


\begin{proof}
This is immediate by Lemma \ref{autPermLem}.
\end{proof}

The next two lemmas are concerned with finite abelian groups:

\begin{lemmma}\label{centAbelianLem}
Let $A$ be a finite abelian group, and let $B$ be a proper subgroup of $A$. The following are equivalent:
\begin{enumerate}
\item The centraliser of $B$ in $\Aut(A)$ is trivial.
\item $|B|$ is odd, and $|A|=2|B|$ (or, equivalently, $A=B\times\IZ/2\IZ$).
\end{enumerate}
\end{lemmma}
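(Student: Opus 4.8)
The plan is to first pin down what the centraliser of $B$ in $\Aut(A)$ means: viewing $A$ and $\Aut(A)$ inside the holomorph $\Hol(A)=A\rtimes\Aut(A)$, an automorphism $\alpha$ centralises $b\in B$ exactly when $\alpha(b)=b$, so $\C_{\Aut(A)}(B)=\{\alpha\in\Aut(A):\alpha(b)=b\text{ for all }b\in B\}$ is the pointwise stabiliser of $B$. The first reduction I would carry out is to the primary components. Writing $A=\bigoplus_p A_p$ and correspondingly $B=\bigoplus_p B_p$ with $B_p=B\cap A_p$, every automorphism preserves each (characteristic) component $A_p$, so $\Aut(A)=\prod_p\Aut(A_p)$ and hence $\C_{\Aut(A)}(B)=\prod_p\C_{\Aut(A_p)}(B_p)$. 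Thus $\C_{\Aut(A)}(B)$ is trivial if and only if $\C_{\Aut(A_p)}(B_p)$ is trivial for every prime $p$, which reduces everything to finite abelian $p$-groups.

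The heart of the argument, and the step I expect to be the main obstacle, is to show that whenever $0\neq B_p\subsetneq A_p$ there is a nontrivial automorphism of $A_p$ fixing $B_p$ pointwise. For this I would build a square-zero endomorphism: since both $B_p$ and $A_p/B_p$ are nontrivial finite abelian $p$-groups, $\Hom(A_p/B_p,B_p)\neq 0$, so composing the quotient map $A_p\to A_p/B_p$ with a nonzero homomorphism $A_p/B_p\to B_p$ and the inclusion $B_p\hookrightarrow A_p$ yields a nonzero $\delta\in\End(A_p)$ with $\delta|_{B_p}=0$ and $\im(\delta)\subseteq B_p\subseteq\ker(\delta)$, whence $\delta^2=0$. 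Then $\id+\delta$ is inverted by $\id-\delta$, so it is a nontrivial automorphism fixing $B_p$ pointwise. The remaining case $B_p=0$ is handled by recalling that the only finite abelian groups with trivial automorphism group are the trivial group and $\IZ/2\IZ$ (for odd $p$, inversion $x\mapsto -x$ already gives a nontrivial automorphism of any nonzero $A_p$).

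Combining these, triviality of $\C_{\Aut(A_p)}(B_p)$ forces, for each $p$, either $B_p=A_p$, or $B_p=0$ with $\Aut(A_p)$ trivial; the latter is impossible for odd $p$ with $A_p\neq 0$, and for $p=2$ it forces $A_2=\IZ/2\IZ$. Hence $B_p=A_p$ for all odd $p$, and since $B$ is proper we must have $B_2\neq A_2$, leaving only $B_2=0$ and $A_2=\IZ/2\IZ$; this gives $|B|$ odd, $A=B\times\IZ/2\IZ$ and $|A|=2|B|$, which is statement (2). For the converse, if $A=B\times\IZ/2\IZ$ with $|B|$ odd then $B_p=A_p$ for all odd $p$ while $A_2=\IZ/2\IZ$ and $B_2=0$, so each factor $\C_{\Aut(A_p)}(B_p)$ is trivial (the odd factors because fixing $A_p=B_p$ pointwise forces the identity, and the factor at $2$ because $\Aut(\IZ/2\IZ)$ is already trivial), giving $\C_{\Aut(A)}(B)=1$.
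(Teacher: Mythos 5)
Your proof is correct and follows essentially the same route as the paper: reduce to the primary components and, for a nontrivial proper $B_p<A_p$, exhibit a nontrivial automorphism fixing $B_p$ pointwise, handling the case $B_p=0$ via the fact that $\IZ/2\IZ$ is the only nontrivial finite abelian group with trivial automorphism group. Your unipotent automorphism $\id+\delta$, with $\delta$ factoring through $A_p/B_p$ and landing in $B_p$, is in substance the paper's explicit map $(ig+h)\mapsto ig+ih_0+h$ (which the paper writes down after reducing to the index-$p$ case), just packaged more conceptually.
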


\begin{proof}
The implication \enquote{(2)$\Rightarrow$(1)} is clear by the fact that $\Aut(\IZ/2\IZ)$ is trivial, so we focus on the proof of \enquote{(1)$\Rightarrow$(2)}, for which we will first show the following claim: \enquote{If $G$ is a finite abelian $p$-group for some prime $p$, and $H$ is a proper subgroup of $G$, then $\C_{\Aut(G)}(H)$ is trivial if and only if $|G|=2$ and $|H|=1$.}

In order to prove the claim, note first that it is clear if $H$ is trivial, as $\IZ/2\IZ$ is the only nontrivial (finite) group with trivial automorphism group. We may thus assume that $H$ is nontrivial, and under this assumption, we need to show that $\C_{\Aut(G)}(H)\not=\{\id_G\}$. For this, we may and will assume w.l.o.g.~that $H$ is of index $p$ in $G$. Fix elements $g\in G\setminus H$ and $h_0\in H$ with $\ord(h_0)=p$. Then every element of $G$ has a unique representation as $ig+h$ for some $i\in\{0,1,\ldots,p-1\}$ and some $h\in H$. We define a function $\alpha:G\rightarrow G$ via $(ig+h)^{\alpha}:=ig+ih_0+h$. It is easy to check that this function is a nontrivial element of $\C_{\Aut(G)}(H)$, which concludes the proof of the claim.

Now that the claim has been proved, we will show the contraposition of \enquote{(1)$\Rightarrow$(2)}: Assume that $|B|$ is even or $|A|>2|B|$; we need to show that $\C_{\Aut(A)}(B)$ is nontrivial. For each prime $p$, denote by $A_p$ and $B_p$, the Sylow $p$-subgroups of $A$ and $B$ respectively. Note that by Sylow's Theorems, for all primes $p$ we have $B_p<A_p$. If $p>2$, then by the claim, $\C_{\Aut(A_p)}(B_p)$ is nontrivial, whence $\C_{\Aut(A)}(B)$ is nontrivial, as required. We may thus assume that $B_p=A_p$ for all $p>2$ and $B_2<A_2$. Then under either of the assumptions \enquote{$|B|$ is even} or \enquote{$|A|>2|B|$}, we find that $(|A_2|,|B_2|)\not=(2,1)$, whence $\C_{\Aut(A_2)}(B_2)$ is nontrivial by the claim, and thus $\C_{\Aut(A)}(B)$ is nontrivial, as required.
\end{proof}

For the second lemma on finite abelian groups, we require a simple definition:

\begin{deffinition}\label{abelianBasisDef}
Let $p$ be a prime, and let $A=\IZ/p^{e_1}\IZ\times\cdots\times\IZ/p^{e_r}\IZ$ with $e_1\geqslant e_2\geqslant\cdots\geqslant e_r\geqslant1$ be a finite abelian $p$-group of rank $r$. A \emph{basis of $A$} is an $r$-tuple $(a_1,\ldots,a_r)\in A^r$ such that $\ord(a_i)=p^{e_i}$ for $i=1,\ldots,r$ and $A=\langle a_1\rangle\times\cdots\times\langle a_r\rangle$.
\end{deffinition}

\begin{lemmma}\label{elAbBasisLem}
Let $p$ be a prime, and let $A=\IZ/p^{e_1}\IZ\times\cdots\times\IZ/p^{e_r}\IZ$ with $e_1\geqslant e_2\geqslant\cdots\geqslant e_r\geqslant1$ be a finite abelian $p$-group of rank $r$. Moreover, let $B$ be an elementary abelian subgroup of $A$ of rank $s$. Then there is a basis $(a_1,\ldots,a_r)$ of $A$ as well as indices $1\leqslant i_1<i_2<\cdots<i_s\leqslant r$ such that $(p^{e_{i_1}-1}a_{i_1},p^{e_{i_2}-1}a_{i_2},\ldots,p^{e_{i_s}-1}a_{i_s})$ is a basis of $B$.
\end{lemmma}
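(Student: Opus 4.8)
The plan is to construct the desired basis of $A$ by first choosing a suitable basis of $B$ inside the socle of $A$ and then lifting it. Since $B$ is elementary abelian we have $B\leqslant A[p]:=\{a\in A: pa=0\}$, the socle of $A$, which is elementary abelian of rank $r$; indeed, the elements $c_i:=p^{e_i-1}a_i$ arising from any basis $(a_1,\ldots,a_r)$ of $A$ in the sense of Definition \ref{abelianBasisDef} form an $\IF_p$-basis of $A[p]$. The key structural device is the canonical filtration $V_e:=A[p]\cap p^{e-1}A$ $(e\geqslant 1)$ of the socle, which is intrinsic to $A$. Working coordinatewise against any basis one checks $V_e=\bigoplus_{i:\,e_i\geqslant e}\langle c_i\rangle$, so that $\dim_{\IF_p}V_e=\#\{i: e_i\geqslant e\}$ and $c_i\in V_{e_i}\setminus V_{e_i+1}$. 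Thus a basis of $A[p]$ of the special form $\{c_i\}$ is exactly one compatible with this filtration, carrying the correct number of basis vectors in each layer $V_e\setminus V_{e+1}$.

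Next I would intersect the filtration with $B$. Choose a basis $b_1,\ldots,b_s$ of $B$ adapted to the induced filtration $B\cap V_e$, i.e.\ so that for each $e$ those $b_j$ lying in $B\cap V_e$ form a basis of $B\cap V_e$; this is arranged by picking a basis of the deepest nonzero piece and successively extending. Each $b_j$ then acquires a well-defined level $\ell_j$ with $b_j\in V_{\ell_j}\setminus V_{\ell_j+1}$. Because $b_j\in V_{\ell_j}=A[p]\cap p^{\ell_j-1}A$, we may write $b_j=p^{\ell_j-1}\tilde a_j$ for some $\tilde a_j\in A$; since $b_j\neq 0$ and $pb_j=0$ this forces $\ord(\tilde a_j)=p^{\ell_j}$. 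The socle elements $p^{\ell_j-1}\tilde a_j=b_j$ are $\IF_p$-independent, and a standard argument (take a minimal nontrivial relation $\sum_j k_j\tilde a_j=0$, multiply by a suitable power of $p$ to push it into $A[p]$, and read off a contradiction to the independence of the $b_j$) then shows $\langle \tilde a_1,\ldots,\tilde a_s\rangle=\bigoplus_{j=1}^s\langle \tilde a_j\rangle$.

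The main obstacle is the final step: showing that $H:=\bigoplus_j\langle\tilde a_j\rangle$ extends to a full basis of $A$, equivalently that $H$ is a direct summand. The crucial point is that $H$ is a \emph{pure} subgroup: the identity $\ell_j-1=h(b_j)$ (where $h$ denotes $p$-height), which is precisely what choosing $\ell_j$ maximal encodes, guarantees $H\cap p^nA=p^nH$ for all $n$, and a pure subgroup of a finite abelian group is a direct summand. (This condition is genuinely needed: lifting $pa\in\IZ/p^2\IZ$ at the wrong level would produce the non-pure subgroup $p(\IZ/p^2\IZ)$.) Choosing a complement $C$, taking a basis of $C$, and relabelling all cyclic factors in order of decreasing order then yields a basis $(a_1,\ldots,a_r)$ of $A$ with $\ord(a_i)=p^{e_i}$ in which the $\tilde a_j$ occupy certain positions $i_1<\cdots<i_s$ with $\ell_j=e_{i_j}$; by construction $b_j=p^{\ell_j-1}\tilde a_j=p^{e_{i_j}-1}a_{i_j}$, so $(p^{e_{i_1}-1}a_{i_1},\ldots,p^{e_{i_s}-1}a_{i_s})$ is a basis of $B$. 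As a shortcut avoiding the purity argument, one may instead invoke the stacked basis theorem for finite abelian groups to obtain at once a decomposition $A=\bigoplus_i\langle a_i\rangle$ with $B=\bigoplus_i(B\cap\langle a_i\rangle)$; since $B$ is elementary abelian each nonzero $B\cap\langle a_i\rangle$ is the socle $\langle p^{e_i-1}a_i\rangle$ of $\langle a_i\rangle$, and the indices with $B\cap\langle a_i\rangle\neq 0$ are the required $i_1,\ldots,i_s$.
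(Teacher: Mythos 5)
Your main construction is correct, but it takes a genuinely different route from the paper. The paper argues by induction on $s$: it fixes a nontrivial $b\in B$, invokes Sambale's Lemma~5 to place $b$ inside a cyclic direct summand $A_1$ of $A$, adjusts a complement of $\langle b\rangle$ in $B$ so that it lies in the complementary summand $A_2$, and applies the induction hypothesis to that pair. You instead build the whole basis in one pass: identify $B$ inside the socle, take a basis of $B$ adapted to the height filtration $V_e=A[p]\cap p^{e-1}A$, lift it, and show the subgroup $H$ generated by the lifts is pure, hence a direct summand. Your version is longer but self-contained (the external lemma the paper cites is essentially your purity argument in the cyclic case), and it makes explicit where elementary abelianness enters.

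Two caveats. First, your justification of purity is too thin as written: knowing $h(b_j)=\ell_j-1$ for each individual $b_j$ does \emph{not} by itself imply that $H$ is pure. In $A=\IZ/p^2\IZ\times\IZ/p\IZ=\langle x\rangle\times\langle y\rangle$, the socle vectors $b_1=px+y$ and $b_2=y$ are independent and both have height $0$, so the lifts are the elements themselves, yet $H=\langle px+y\rangle\oplus\langle y\rangle=A[p]$ satisfies $H\cap pA=\langle px\rangle\neq 0=pH$. What saves your argument is the filtration-adapted choice of the $b_j$, which guarantees that \emph{every} element of $B=H[p]$, not just each basis vector, has the same height in $H$ as in $A$ (an element $\sum_{j\in S}u_jb_j$ with all $u_j$ units lies in $V_e$ precisely when $\ell_j\geqslant e$ for all $j\in S$); equality of heights on the socle then propagates to purity of $H$ by a short induction on element orders. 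You should make both of these steps explicit. Second, the ``stacked basis theorem for finite abelian groups'' you offer as a shortcut is false in general: the paper's own Remark following the lemma exhibits $A=\IZ/p^3\IZ\times\IZ/p\IZ$ and $B=\langle pa_1+a_2\rangle\cong\IZ/p^2\IZ$, and since this $B$ is indecomposable it would have to lie in a single cyclic summand of order $p^3$, forcing its generator into $pA$, which it is not. So no decomposition $A=\bigoplus_i\langle a_i\rangle$ with $B=\bigoplus_i(B\cap\langle a_i\rangle)$ exists there, and that route is not available as a citation; the elementary abelian case of such a statement is exactly the lemma being proved. Drop the shortcut and rely on your main argument.
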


\begin{proof}
We proceed by induction on $s$. The induction base, $s=0$, is vacuously true. Assume thus that $s\geqslant1$ and that the assertion is true for elementary abelian subgroups of rank at most $s-1$. Fix a nontrivial element $b\in B$. By \cite[Lemma 5]{Sam19a}, there are subgroups $A_1,A_2\leqslant A$ such that $b\in A_1$, $A_1$ is cyclic, and $A=A_1\times A_2$. Fix a direct complement $B_2$ of $B_1:=\langle b\rangle$ in $B$. Through subtracting suitable multiples of $a$ from the entries of any given basis of $B_2$, we may assume without loss of generality that $B_2\leqslant A_2$. The assertion now follows through applying the induction hypothesis to $B_2$ and $A_2$.
\end{proof}

\begin{remmark}
Consider the following assertion, which generalises the statement of Lemma \ref{elAbBasisLem}:

\enquote{For every prime $p$, every finite abelian $p$-group $A$ and every subgroup $B$ of $A$, there are bases $\vec{b}$ and $\vec{a}$ of $B$ and $A$ respectively such that for each entry $b$ of $\vec{b}$, some entry of $\vec{a}$ is a root of $b$.}

This assertion is \emph{not} true. For example, for an arbitrary prime $p$, consider $A=\IZ/p^3\IZ\times\IZ/p\IZ$, with basis $(a_1,a_2)$, and set $b:=pa_1+a_2$ and $B:=\langle b\rangle\cong\IZ/p^2\IZ$. Then if the above assertion was true, the generator $b$ of $B$ would need to have a $p$-th root in $A$, which it does not.
\end{remmark}

Finally, as in \cite{Bor20a}, we will be using the concept of a \enquote{central automorphism} at several points in our arguments, and we briefly state the most important facts concerning this concept (which are well-known and easy to check). For each group $G$ and each group homomorphism $f:G\rightarrow\zeta G$, the function $\alpha_f:G\rightarrow G$, $g\mapsto g\cdot f(g)$, is an endomorphism of $G$ called the \emph{central endomorphism of $G$ associated with $f$}. The kernel of $\alpha_f$ consists of those $g\in\zeta G$ such that $f(g)=g^{-1}$. In particular, if $G$ is finite, then $\alpha_f$ is an automorphism of $G$ if and only if the only element of $\zeta G$ that is inverted by $f$ is $1_G$. Such automorphisms are called \emph{central automorphisms}, and they form a subgroup of $\Aut(G)$ denoted by $\Aut_{\cent}(G)$.

\section{Proof of Theorem \ref{mainTheo}(1)}\label{sec3}

We split the proof of Theorem \ref{mainTheo}(1) into the three cases $\maol_{\perm}(G)=1,2,3$, each dealt with in one of the following three subsections.

\subsection{Finite transitive permutation groups with maximum normaliser orbit length 1}\label{subsec3P1}

The following is a simple consequence of known results:

\begin{propposition}\label{maol1Prop}
Let $G$ be a finite transitive permutation group. The following are equivalent:
\begin{enumerate}
\item $\maol_{\perm}(G)=1$.
\item Up to permutation group isomorphism, $G$ is one of $(\IZ/m\IZ)_{\reg}$ for $m\in\{1,2\}$.
\end{enumerate}
\end{propposition}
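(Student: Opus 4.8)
The plan is to reduce the condition $\maol_{\perm}(G)=1$ to the triviality of $\Aut_{\perm}(G)$, then to use the containment $\Inn(G)\leqslant\Aut_{\perm}(G)$ to force $G$ to be abelian, and finally to invoke Lemmas \ref{transAbLem} and \ref{regularAutLem} together with the classical classification of finite groups with trivial automorphism group.

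First I would observe that $\maol_{\perm}(G)=1$ means every $\Aut_{\perm}(G)$-orbit on $G$ is a singleton, which is equivalent to $\Aut_{\perm}(G)=\{\id_G\}$. Since $G\leqslant\N_{\Sym(\Omega)}(G)$, conjugation by elements of $G$ itself lies in the conjugation action defining $\Aut_{\perm}(G)$, so $\Inn(G)\leqslant\Aut_{\perm}(G)$. Hence $\Aut_{\perm}(G)=\{\id_G\}$ forces $\Inn(G)=\{\id_G\}$, i.e.\ $G$ is abelian.

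Then, since $G$ is transitive and abelian, Lemma \ref{transAbLem}(2) yields that $G$ is regular, and Lemma \ref{regularAutLem} gives $\Aut_{\perm}(G)=\Aut(G)$. Thus the hypothesis becomes $\Aut(G)=\{\id_G\}$, and it remains to recall the classical fact that the only finite groups with trivial automorphism group are the trivial group and $\IZ/2\IZ$. One way to see this: $\Aut(G)$ trivial again forces $G$ abelian; the inversion automorphism $g\mapsto g^{-1}$ must then be the identity, so $G$ is an elementary abelian $2$-group; and since $\GL(r,2)$ is nontrivial for $r\geqslant2$, the rank of $G$ is at most $1$. Hence $G\cong\IZ/m\IZ$ with $m\in\{1,2\}$, and being regular it is permutation-isomorphic to $(\IZ/m\IZ)_{\reg}$.

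The converse direction is immediate: for $m\in\{1,2\}$ the group $(\IZ/m\IZ)_{\reg}$ is regular, so $\Aut_{\perm}=\Aut$ is trivial and $\maol_{\perm}=1$. The only step that genuinely requires attention is the inclusion $\Inn(G)\leqslant\Aut_{\perm}(G)$, since it is what translates the permutation-theoretic hypothesis into the structural statement that $G$ is abelian; once this is available, the proposition follows formally from the two cited lemmas and the classical fact above.
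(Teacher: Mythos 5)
Your proposal is correct and follows essentially the same route as the paper: use $\Inn(G)\leqslant\Aut_{\perm}(G)$ to force $G$ abelian, apply Lemma \ref{transAbLem}(2) to get regularity, and then Lemma \ref{regularAutLem} to reduce to the abstract case. The only difference is that the paper cites the classification of finite abstract groups with $\maol(G)=1$ from the earlier work, whereas you inline the (correct) elementary argument that a finite group with trivial automorphism group is trivial or $\IZ/2\IZ$.
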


\begin{proof}
The implication \enquote{(2)$\Rightarrow$(1)} is easy, so we focus on the implication \enquote{(1)$\Rightarrow$(2)}. Since $\Inn(G)\leqslant\Aut_{\perm}(G)$, all conjugacy classes of $G$ are of length $1$, i.e., $G$ is abelian. Hence, by Lemma \ref{transAbLem}(2), $G$ is regular, and by Lemma \ref{regularAutLem}, $\maol(G)=1$. The result now follows from \cite[Proposition 3.1.1]{Bor20a}.
\end{proof}

\subsection{Finite transitive permutation groups with maximum normaliser orbit length 2}\label{subsec3P2}

In this subsection, we will prove the following result:

\begin{propposition}\label{maol2Prop}
Let $G\leqslant\Sym(\Omega)$ be a finite transitive permutation group. The following are equivalent:
\begin{enumerate}
\item $\maol_{\perm}(G)=2$.
\item Up to permutation group isomorphism, $G$ is one of the following:
\begin{itemize}
\item $(\IZ/m\IZ)_{\reg}$ for some $m\in\{3,4,6\}$, or
\item $\D_8\leqslant\Sym(4)$.
\end{itemize}
\end{enumerate}
\end{propposition}

\begin{proof}
The implication \enquote{(2)$\Rightarrow$(1)} is easy, so we will be concerned with the implication \enquote{(1)$\Rightarrow$(2)}. If $G$ is regular, then by Lemma \ref{regularAutLem} and \cite[Proposition 3.2.4]{Bor20a}, it follows that $G\cong(\IZ/m\IZ)_{\reg}$ for some $m\in\{3,4,6\}$. We will thus henceforth assume that $G$ is nonregular (hence nonabelian by Lemma \ref{transAbLem}(2)), and under this assumption, we will show that $G\cong\D_8\leqslant\Sym(4)$.

By assumption, all conjugacy classes of $G$ are of length at most $2$, and so the exponent of $\Inn(G)$ is $2$, whence $\Inn(G)$ is an elementary abelian $2$-group. In particular, $G$ is nilpotent of class $2$, and all Sylow $p$-subgroups of $G$ for $p>2$ are abelian. It follows that $G=G_2\times A$ where $G_2$ is a nonabelian $2$-group of class $2$ and $A$ is a finite abelian group of odd order. Fix a point $\omega\in\Omega$, and consider the associated point stabiliser $G_{\omega}\leqslant G$. Since $G_{\omega}$ is core-free in $G$, we find that $G_{\omega}\cap A=\{1_G\}$, or equivalently (using the coprimality of $|G_2|$ and $|A|$), $G_{\omega}\leqslant G_2$.

We claim that $A$ is trivial. Assume otherwise. Then by \cite[Proposition 3.1.1]{Bor20a}, there is an $a\in A$ such that $|a^{\Aut(A)}|\geqslant2$. Let $g\in G_2\setminus\zeta G_2$, and consider the element $h:=ga\in G$. Note that $\Aut(G)=\Aut(G_2)\times\Aut(A)$, and since $G_{\omega}\leqslant G_2$, it follows by Lemma \ref{autPermLem} that $\Inn(G_2)\times\Aut(A)\leqslant\Aut_{\perm}(G)$. Hence
\[
|h^{\Aut_{\perm}(G)}|\geqslant|h^{\Inn(G_2)\times\Aut(A)}|=|g^{G_2}|\cdot|a^{\Aut(A)}|\geqslant 2\cdot2=4>2,
\]
a contradiction. Therefore, $A$ is trivial, and so $G=G_2$ is a nonabelian $2$-group of class $2$.

Observing once more that $G_{\omega}$ is core-free in $G$, we find that $G_{\omega}\cap\zeta G=\{1_G\}$. As $G/\zeta G=\Inn(G)$ is (as noted above) an elementary abelian $2$-group, it follows that $G_{\omega}$ is also an elementary abelian $2$-group, embedded into $G/\zeta G$ via the canonical projection $G\rightarrow G/\zeta G$. Note that $|G_{\omega}|<|G/\zeta G|$, since otherwise, $G=G_{\omega}\times\zeta G$, which is impossible, since $G_{\omega}$ is both nontrivial and core-free in $G$.

We claim that $\zeta G$ is cyclic. Assume otherwise. Then, fixing an embedding $(\IZ/2\IZ)^2\overset{\iota}\hookrightarrow\zeta G$ and a projection $(G/\zeta G)/(G_{\omega}\zeta G/\zeta G)\overset{\pi}\twoheadrightarrow\IZ/2\IZ$, we have four distinct group homomorphisms
\[
f:G\overset{\text{can.}}\twoheadrightarrow G/\zeta G\overset{\text{can.}}\twoheadrightarrow(G/\zeta G)/(G_{\omega}\zeta G/\zeta G)\overset{\pi}\twoheadrightarrow\IZ/2\IZ\hookrightarrow(\IZ/2\IZ)^2\overset{\iota}\hookrightarrow\zeta G.
\]
These homomorphisms $f$ satisfy  $\zeta G\leqslant\ker(f)$,  the associated central automorphisms $\alpha_f$ centralise $G_{\omega}$ (in particular, $\alpha_f\in\Aut_{\perm}(G)$ by Lemma \ref{autPermLem}) and  a suitable element of $G$ has four distinct images under those automorphisms $\alpha_f$. It follows that $\maol_{\perm}(G)\geqslant 4>2$, a contradiction. This concludes the proof that $\zeta G$ is cyclic.

Note that $G'\leqslant\zeta G$ is also cyclic. But since $G/\zeta G$ has exponent $2$, it follows that for all $x,y\in G$,
\[
1_G=[x^2,y]=[x,y]^x[x,y]=[x,y]^2,
\]
whence $G'\cong\IZ/2\IZ$. Next, we claim that $G/G'$ is an elementary abelian $2$-group. Assume otherwise. Then $|\zeta G|\geqslant 4$ (otherwise, $G$ is extraspecial). Denote the image of $G_{\omega}$ under the canonical projection $G\rightarrow G/G'$ by $P$. Since $P$ is elementary abelian, we may apply Lemma \ref{elAbBasisLem} to find a basis $\vec{b}=(b_1,\ldots,b_n)$ of $G/G'$ such that a suitable subsequence of $\vec{b}$ \enquote{powers up} to a basis of $P$. Now, the first basis entry $b_1$ has order $2^k$ for some $k\geqslant2$, and using the facts that $G'\leqslant\zeta G$, that $G_{\omega}\cap\zeta G=\{1_G\}$ and that every square in $G$ lies in $\zeta G$, we conclude that $b_1^{2^{k-1}}\notin P$. It follows that $P\leqslant\langle b_2,\ldots,b_n\rangle$, whence we may fix a projection $\pi_1:G/(G_{\omega}G')\twoheadrightarrow\IZ/2^k\IZ$. We also fix a projection $\pi_2:\IZ/2^k\IZ\twoheadrightarrow\IZ/4\IZ$. Through composition, we obtain four distinct group homomorphisms
\[
f:G\overset{\text{can.}}\twoheadrightarrow G/(G_{\omega}G')\overset{\pi_1}\twoheadrightarrow\IZ/2^k\IZ\overset{\pi_2}\twoheadrightarrow\IZ/4\IZ\rightarrow\zeta G,
\]
each of which has the property that $1_G$ is the only element of $\zeta G$ that is inverted by $f$. Hence we have four distinct associated central automorphisms $\alpha_f\in\Aut_{\perm}(G)$, and a suitable element of $G$ has four distinct images under these automorphisms, whence $\maol_{\perm}(G)\geqslant 4>2$, a contradiction. This concludes the proof that $G/G'$ is elementary abelian.

Now, setting $d:=\log_2{|G/G'|}$ and $t:=\log_2{|G_{\omega}|}$, we define a \emph{standard tuple in $G$} as a tuple $(g_1,\ldots,g_d)\in G^d$ such that
\begin{itemize}
\item $(g_1,\ldots,g_t)$ is a basis of $G_{\omega}$, and
\item the entry-wise image of $(g_1,\ldots,g_d)$ under the canonical projection $G\rightarrow G/G'$ is a basis of $G/G'$.
\end{itemize}
If $\vec{g}=(g_1,\ldots,g_d)\in G^d$ is a standard tuple in $G$, then the \emph{power-commutator tuple associated with $\vec{g}$} is the $(d+{d \choose 2})$-tuple
\[
(g_1^2,g_2^2,\ldots,g_d^2,[g_1,g_2],[g_1,g_3],\ldots,[g_1,g_d],[g_2,g_3],[g_2,g_4],\ldots,[g_2,g_d],\ldots,[g_{d-1},g_d])
\]
with entries in $G'$. Two standard tuples in $G$ are called \emph{equivalent} if and only if they have the same power-commutator tuple.

As in \cite[proof of Proposition 3.2.4]{Bor20a}, two standard tuples in $G$ are conjugate under the component-wise action of $\Aut(G)$ if and only if they are equivalent. However, the above definition of \enquote{standard tuple} differs from the one in \cite[proof of Proposition 3.2.4]{Bor20a}, and it was chosen in such a way that any automorphism $\alpha$ of $G$ which maps any given standard tuple in $G$ to any other given standard tuple in $G$ has the property that $G_{\omega}^{\alpha}=G_{\omega}$, whence $\alpha\in\Aut_{\perm}(G)$ by Lemma \ref{autPermLem}. It follows that each equivalence class of standard tuples in $G$ is contained in an $\Aut_{\perm}(G)$-orbit on $G^d$. Considering that the number of equivalence classes of standard tuples in $G$ is at most $2^{d+{d\choose 2}}$ (this uses that $|G'|=2$) and that the number of standard tuples in $G$ is at least
\[
\prod_{i=0}^{t-1}{(2^t-2^i)}\cdot\prod_{j=t}^{d-1}{(2^d-2^j)}\cdot 2^{d-t}=2^{d+{d\choose 2}-t}\cdot\prod_{i=1}^t{(2^i-1)}\cdot\prod_{j=1}^{d-1}{(2^j-1)},
\]
it follows that there is an equivalence class of standard tuples in $G$ (and thus an $\Aut_{\perm}(G)$-orbit on $G^d$) of size at least
\begin{align*}
&\frac{2^{d+{d\choose 2}-t}\cdot\prod_{i=1}^t{(2^i-1)}\cdot\prod_{j=1}^{d-t}{(2^j-1)}}{2^{d+{d\choose 2}}}=\frac{\prod_{i=1}^t{(2^i-1)}\cdot\prod_{j=1}^{d-t}{(2^j-1)}}{2^t}= \\
&(1-\frac{1}{2^t})\cdot 2^{{t\choose 2}}\cdot\prod_{i=1}^{t-1}{(1-\frac{1}{2^i})}\cdot 2^{{{d-t+1}\choose 2}}\cdot\prod_{j=1}^{d-t}{(1-\frac{1}{2^j})}\geqslant\frac{1}{2}\cdot 2^{{t\choose 2}+{{d-t+1}\choose 2}}\cdot(\prod_{i=1}^{\infty}{(1-\frac{1}{2^i})})^2 \\
&\geqslant\frac{1}{2}\cdot 2^{\frac{d}{4}\cdot(\frac{d}{2}-1)}\cdot 0.28^2=0.0392\cdot 2^{\frac{d^2}{8}-\frac{d}{4}},
\end{align*}
where the first inequality uses that $t\geqslant1$ (since $G$ is nonregular), and the second inequality uses that
\[
\min\{{t\choose 2},{{d-t+1}\choose 2}\}\geqslant\frac{d}{4}\cdot(\frac{d}{2}-1).
\]
However, since all $\Aut_{\perm}(G)$-orbits on $G$ are of length at most $2$, it follows that the length of an $\Aut_{\perm}(G)$-orbit on $G^d$ cannot exceed $2^d$. Therefore,
\[
2^d \geqslant 0.0392\cdot 2^{\frac{d^2}{8}-\frac{d}{4}},
\]
which implies that $d\leqslant12$. Moreover, for $d=2,3,\ldots,12$ and $t\in\{1,2,\ldots,d-1\}$, one can check that
\[
(1-\frac{1}{2^t})\cdot 2^{{t\choose 2}}\cdot\prod_{i=1}^{t-1}{(1-\frac{1}{2^i})}\cdot 2^{{{d-t+1}\choose 2}}\cdot\prod_{j=1}^{d-t}{(1-\frac{1}{2^j})} > 2^d
\]
unless
\[
(d,t)\in\{(2,1),(3,1),(3,2),(4,1),(4,2),(4,3),(5,2),(5,3),(5,4),(6,3),(6,4)\},
\]
so these are the only possibilities for $(d,t)$. In particular, the degree of $G$, which equals $2^{d-t+1}$, is at most $16$. To conclude this proof, we will make use of the library of finite transitive permutation groups of small degree in GAP \cite{GAP4}, which was implemented by Hulpke \cite{TransGrp}, goes up to degree $32$ and is based on the papers \cite{But93a,BK83a,CH08a,Hul05a,Roy87a} as well as an unpublished classification, due to Sims, of the primitive permutation groups up to degree $50$ (to cover the transitive groups of degree $31$); see also \cite{HR19a} for the latest record concerning the classification of transitive groups of small degree, which goes up to degree $48$. In any case, this classification allows us to conclude that the only transitive permutation group $G$ of degree at most $16$ such that
\begin{itemize}
\item $G$ is a nonabelian $2$-group of class $2$,
\item $\zeta G$ is cyclic,
\item $G/G'$ is elementary abelian, and
\item $\maol_{\perm}(G)=2$
\end{itemize}
is $\D_8\leqslant\Sym(4)$.
\end{proof}

\subsection{Finite transitive permutation groups with maximum normaliser orbit length 3}\label{subsec3P3}

In this subsection, we will be concerned with the proof of the following part of Theorem \ref{mainTheo}(1):

\begin{propposition}\label{maol3Prop}
Let $G\leqslant\Sym(\Omega)$ be a finite transitive permutation group. The following are equivalent:
\begin{enumerate}
\item $\maol_{\perm}(G)=3$.
\item Up to permutation group isomorphism, $G$ is one of the following:
\begin{itemize}
\item $((\IZ/2\IZ)^2)_{\reg}$, $\Sym(3)_{\reg}$, or
\item $\D_{2n}\leqslant\Sym(n)$ for some $n\in\{3,6\}$.
\end{itemize}
\end{enumerate}
\end{propposition}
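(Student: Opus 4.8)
The plan is to treat (2)$\Rightarrow$(1) by directly computing $\Aut_{\perm}(G)$ and its orbits for each of the four listed groups, and to concentrate on (1)$\Rightarrow$(2). Exactly as in the proofs of Propositions \ref{maol1Prop} and \ref{maol2Prop}, I would first dispose of the regular case: if $G$ is regular, then Lemma \ref{regularAutLem} gives $\maol(G)=\maol_{\perm}(G)=3$, so $G$ is one of the finite abstract groups with $\maol(G)=3$. By the corresponding classification of finite abstract groups with $\maol=3$ in \cite{Bor20a} (the counterpart of the results cited for $\maol\in\{1,2\}$), these are precisely $(\IZ/2\IZ)^2$ and $\Sym(3)$, yielding the two regular entries of the list. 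It remains to handle the nonregular case, where essentially all the work lies.

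So assume $G\leqslant\Sym(\Omega)$ is nonregular; then $G$ is nonabelian by Lemma \ref{transAbLem}(2), and for a fixed point $\omega$ the stabiliser $G_\omega$ is a nontrivial core-free subgroup, so that $G_\omega\cap\zeta G=\{1_G\}$. Since $\Inn(G)\leqslant\Aut_{\perm}(G)$, every conjugacy class of $G$ has length at most $3$, i.e.\ $|G:\C_G(g)|\in\{1,2,3\}$ for all $g\in G$. The first task is to extract the coarse structure of $G$ from this class-size restriction. In contrast to the $\maol_{\perm}=2$ case, $G$ need not be nilpotent (as $\Sym(3)$ shows), so I would split according to the set of occurring class sizes, a subset of $\{1,2,3\}$, and invoke the structure theory of finite groups all of whose class sizes lie in $\{1,2,3\}$ to pin down $G'$ and $G/\zeta G$ in each case.

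The driving mechanism, as in Proposition \ref{maol2Prop}, is that Lemma \ref{autPermLem} supplies many elements of $\Aut_{\perm}(G)$. On the one hand, every central automorphism $\alpha_f$ attached to a homomorphism $f\colon G\rightarrow\zeta G$ with $f(G_\omega)=\{1_G\}$ fixes $G_\omega$ and hence lies in $\Aut_{\perm}(G)$; whenever $\zeta G$ or $G/G'$ has ``too much room'' relative to $G_\omega$, one builds four pairwise distinct such automorphisms sending a single element to four places, contradicting $\maol_{\perm}(G)=3$ and thereby forcing $\zeta G$ and $G/G'$ to be small. On the other hand there is a ``linear'' supply of automorphisms acting on $G/\zeta G$ while preserving the image of $G_\omega$; for an extraspecial $3$-group these already produce an orbit of length $6$, and it is exactly this kind of argument that eliminates the a priori unbounded families (notably extraspecial $3$-groups and large class-$2$ groups) that the class-size condition alone cannot exclude.

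With the structure thus constrained, I would bound the rank $d(G)$ by a counting argument in the style of \cite[proof of Proposition 3.2.4]{Bor20a}: define standard tuples adapted simultaneously to a basis of the image of $G_\omega$ and to $G/G'$, bound the number of their equivalence classes by the now-controlled number of admissible power-commutator tuples, and compare with the total number of standard tuples; since $\maol_{\perm}(G)=3$ forces every $\Aut_{\perm}(G)$-orbit on $G^{d(G)}$ to have length at most $3^{d(G)}$, an excessively large rank is impossible. Once $d(G)$ is bounded we get $|\Aut_{\perm}(G)|\leqslant 3^{d(G)}$, so $|\Omega|$ and $|G|$ are bounded via Lemma \ref{ledNeuLem}, and the GAP library of transitive groups of small degree \cite{GAP4,TransGrp} finishes the identification, leaving precisely $\D_{2n}\leqslant\Sym(n)$ for $n\in\{3,6\}$. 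The main obstacle is that the class-size bound does not bound $|G|$ on its own: everything rests on harvesting enough automorphisms from the normaliser through Lemma \ref{autPermLem}, and the most delicate point is redoing the standard-tuple count when $G'$ need not have order $2$ and $G/\zeta G$ need not be an elementary abelian $2$-group, while correctly disposing of the non-nilpotent possibilities.
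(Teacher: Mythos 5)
Your overall architecture (regular case via \cite{Bor20a}, harvesting elements of $\Aut_{\perm}(G)$ through Lemma \ref{autPermLem}, a standard-tuple count, a computer check at the end) matches the paper's, but two load-bearing steps are missing or would fail. First, the paper's structural reduction does not run through the class-size condition at all: the pivotal observation (Lemma \ref{auxLem1}) is that for every $\alpha\in\Aut_{\perm}(G)$ one has $G=\C_G(\alpha^2)\cup\C_G(\alpha^3)$, because each $g$ lies in an $\langle\alpha\rangle$-orbit of length at most $3$; since $G$ is not the union of two proper subgroups, $\ord(\alpha)\in\{1,2,3\}$. Applied to inner automorphisms this bounds $\Exp(G/\zeta G)$, it yields that $G$ is a $\{2,3\}$-group, and once the nilpotent possibilities are eliminated by the tuple count (Lemma \ref{auxLem2}) it forces $\Inn(G)\cong\Sym(3)$, so that $G_{\omega}$ embeds into $\Sym(3)$ and only three small subcases survive. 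Your substitute --- \enquote{the structure theory of finite groups all of whose class sizes lie in $\{1,2,3\}$} --- is both uncited and strictly weaker: it does not bound the exponent or the order of $G/\zeta G$ (every extraspecial $2$-group has all classes of size at most $2$), it does not by itself restrict the primes dividing $|G|$, and it provides no analogue of the identification $\Inn(G)\cong\Sym(3)$ on which the entire non-nilpotent analysis rests.

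Second, the endgame as you describe it is not executable. Bounding $|G|$ via Lemma \ref{ledNeuLem} from $|\Aut_{\perm}(G)|\leqslant 3^{d(G)}$ gives $|G|\leqslant\ffrak(d,3^d)$, which by Notation \ref{mainNot} is astronomically large already for $d=2$, whereas the transitive-groups library \cite{GAP4,TransGrp} used for the final identification reaches only degree $32$ (resp.\ $48$). The paper never invokes Lemma \ref{ledNeuLem} in this proof; instead it extracts explicit, tiny degree bounds directly from the structure: the tuple count in the nilpotent $3$-group case gives degree at most $27$, and in the $\Inn(G)\cong\Sym(3)$ case the central-automorphism and $\Stab_{\Aut(\zeta G)}(P)$ arguments force $|\zeta G|\leqslant 6$, leaving an explicit list of pairs $(\deg(G),|G|)$ ending at $(18,36)$. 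Without bounds of this quality the concluding computer verification cannot be carried out, so you would need to replace this step by the paper's sharper, case-specific estimates.
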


We will first show two auxiliary results, which are extensions of \cite[Lemmas 3.3.1(2) and 3.3.2]{Bor20a}.

\begin{lemmma}\label{auxLem1}
Let $G\leqslant\Sym(\Omega)$ be a finite, transitive, nonregular permutation group such that $\maol_{\perm}(G)=3$. Then the following hold:
\begin{enumerate}
\item The set of element orders of $\Aut_{\perm}(G)$ is contained in $\{1,2,3\}$.
\item $G$ is a $\{2,3\}$-group.
\end{enumerate}
\end{lemmma}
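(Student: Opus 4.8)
The plan is to handle the two parts in order, feeding part (1) into part (2).

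For part (1) I would start from the fact that $\Aut_{\perm}(G)$ acts faithfully on $G$ and that $\Inn(G)\leqslant\Aut_{\perm}(G)$, so for any $\alpha\in\Aut_{\perm}(G)$ each $\langle\alpha\rangle$-orbit on $G$ is contained in an $\Aut_{\perm}(G)$-orbit and hence has length at most $3$. Viewing $\alpha$ as a permutation of the set $G$, its order is the least common multiple of its orbit lengths; since these lie in $\{1,2,3\}$, this forces $\ord(\alpha)\in\{1,2,3,6\}$. Thus the entire content of part (1) is to exclude the value $6$.

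So suppose $\ord(\alpha)=6$. For the orbit lengths to have least common multiple $6$ there must be an element $g$ in an $\langle\alpha\rangle$-orbit of length $3$ and an element $h$ in an $\langle\alpha\rangle$-orbit of length $2$. I would then examine the single element $gh\in G$, exploiting that the multiplication map $G\times G\to G$ is equivariant for the diagonal action of $\langle\alpha\rangle$: the six products $g^{\alpha^{i}}h^{\alpha^{j}}$ with $(i,j)\in\{0,1,2\}\times\{0,1\}$ are exactly the images of the length-$6$ orbit of $(g,h)$. Writing $c:=h^{\alpha}h^{-1}\neq 1_G$, the key computation is that $h^{\alpha^{2}}=h$ forces $c^{\alpha}=c^{-1}$. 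One checks that the only two algebraically possible coincidences among the six products are $g^{\alpha}=gc^{-1}$ and $g^{\alpha^{2}}=gc^{-1}$, and that, using $c^{\alpha}=c^{-1}$ together with $g^{\alpha^{3}}=g$, each of these would force $g^{\alpha^{2}}=g$ or $c=1_G$, both contradictions. Hence the $\langle\alpha\rangle$-orbit of $gh$ has length $6$, giving $\maol_{\perm}(G)\geqslant 6$, a contradiction. I expect this coincidence-elimination step to be the main obstacle in part (1), since it is the one place requiring a genuine (if short) computation.

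For part (2) I would first reduce to the centre. By part (1) applied to inner automorphisms, every element of $\Inn(G)\cong G/\zeta G$ has order in $\{1,2,3\}$, so $\Exp(G/\zeta G)\mid 6$ and $G/\zeta G$ is a $\{2,3\}$-group. Hence any prime $p\geqslant 5$ dividing $|G|$ must divide $|\zeta G|$, and it suffices to show $\zeta G$ is a $\{2,3\}$-group. Assuming for contradiction that $p\geqslant 5$ divides $|\zeta G|$, note that $p\nmid|G/\zeta G|$, so a Sylow $p$-subgroup $P$ of $G$ maps trivially into $G/\zeta G$ and is therefore central; being a central, hence normal, Hall subgroup, it has a complement by the Schur--Zassenhaus theorem, yielding a direct decomposition $G=P\times H$ with $|H|$ coprime to $p$.

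Finally I would manufacture an overlong $\Aut_{\perm}(G)$-orbit. Fixing a point stabiliser $G_{\omega}$, core-freeness (Lemma \ref{transAbLem}(1)) gives $G_{\omega}\cap P=\{1_G\}$ since $P\leqslant\zeta G$; then raising any $g\in G_{\omega}$ to the power $|H|$ kills its $H$-component and lands in $G_{\omega}\cap P=\{1_G\}$, so by coprimality its $P$-component is trivial, i.e.\ $G_{\omega}\leqslant H$. Consequently every automorphism $\beta\times\id_{H}$ with $\beta\in\Aut(P)$ fixes $G_{\omega}$ pointwise, so by Lemma \ref{autPermLem} the whole of $\Aut(P)$ (acting on the first factor) lies in $\Aut_{\perm}(G)$. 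Choosing $z\in P$ of order $p$, the power maps $z\mapsto z^{a}$ for $a\in(\IZ/p\IZ)^{\times}$ are realised this way, so the $\Aut_{\perm}(G)$-orbit of $z$ has length at least $p-1\geqslant 4$, contradicting $\maol_{\perm}(G)=3$. This forces $\zeta G$, and hence $G$, to be a $\{2,3\}$-group, completing part (2). The conceptual crux here is the observation that $G_{\omega}$ misses the central $p$-part entirely, which is exactly what converts the abstract availability of $\Aut(P)$ into availability inside the smaller group $\Aut_{\perm}(G)$.
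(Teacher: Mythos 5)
Your argument is correct, and at bottom it runs on the same ideas as the paper's proof; the main difference is that you make everything self-contained where the paper delegates both steps to arguments from \cite{Bor20a}. For part (1), the paper observes that since every $\langle\alpha\rangle$-orbit has length in $\{1,2,3\}$ one gets $G=\C_G(\alpha^2)\cup\C_G(\alpha^3)$, and then uses the fact that a group is never the union of two proper subgroups; your element $gh$, with $g\in\C_G(\alpha^3)\setminus\C_G(\alpha^2)$ and $h\in\C_G(\alpha^2)\setminus\C_G(\alpha^3)$, is exactly the witness in the standard proof of that covering fact, so the two routes coincide. One small imprecision: your claim that the only \enquote{algebraically possible coincidences} are $g^{\alpha}=gc^{-1}$ and $g^{\alpha^{2}}=gc^{-1}$ is not the cleanest bookkeeping. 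Since the $\langle\alpha\rangle$-orbit length of $gh$ divides $6$, it suffices to rule out $(gh)^{\alpha^{2}}=gh$ and $(gh)^{\alpha^{3}}=gh$, which unwind directly to $g^{\alpha^{2}}=g$ and $h^{\alpha}=h$; both are excluded by the choice of $g$ and $h$, so your conclusion that the orbit has length $6$ is correct, just reachable more directly. For part (2), the paper establishes the decomposition $G=G_{\{2,3\}}\times G_{\{2,3\}'}$ with central Hall $\{2,3\}'$-part, puts $G_{\omega}$ inside $G_{\{2,3\}}$ by core-freeness, and then cites a lemma of \cite{Bor20a} to produce an orbit of length at least $4$; your prime-by-prime variant via Schur--Zassenhaus and the power maps $z\mapsto z^{a}$ on a central Sylow $p$-subgroup with $p\geqslant5$ is the same mechanism made explicit, and the crucial point --- that $G_{\omega}$ misses the central $p$-part, so all of $\Aut(P)$ survives into $\Aut_{\perm}(G)$ by Lemma \ref{autPermLem} --- is precisely the one the paper exploits. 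Both versions are valid; yours trades brevity for independence from \cite{Bor20a}.
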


\begin{proof}
For (1): As in \cite[proof of Lemma 3.3.1(2,a)]{Bor20a}, for all $\alpha\in\Aut_{\perm}(G)$, one has that $G=\C_G(\alpha^2)\cup\C_G(\alpha^3)$, and thus $G=\C_G(\alpha^2)$ or $G=\C_G(\alpha^3)$, as $G$ is not the union of two proper subgroups.

For (2): As in \cite[proof of Lemma 3.3.1(2,b)]{Bor20a}, one can show that $G=G_{\{2,3\}}\times G_{\{2,3\}'}$, where $G_{\{2,3\}}$ is the unique Hall-$\{2,3\}$-subgroup of $G$, and $G_{\{2,3\}'}$ is the unique, central Hall-$\{2,3\}'$-subgroup of $G$. Fix a point $\omega\in\Omega$, and consider the point stabiliser $G_{\omega}\leqslant G$. Since $G_{\omega}$ is core-free in $G$, it follows that $G_{\omega}\cap G_{\{2,3\}'}=\{1_G\}$, and thus $G_{\omega}\leqslant G_{\{2,3\}}$, whence $\Inn(G_{\{2,3\}})\times\Aut(G_{\{2,3\}'})$ embeds naturally into $\Aut_{\perm}(G)$ by Lemma \ref{autPermLem}. Hence, if $|G_{\{2,3\}'}|>1$, then it follows by \cite[Lemma 3.1(2)]{Bor20a} that $\maol_{\perm}(G)\geqslant 4>3$, a contradiction.
\end{proof}

\begin{lemmma}\label{auxLem2}
Let $G\leqslant\Sym(\Omega)$ be a finite, transitive, nonregular permutation group such that $\maol_{\perm}(G)=3$. Then the set of element orders of $\Inn(G)$ is exactly $\{1,2,3\}$.
\end{lemmma}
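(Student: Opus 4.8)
The plan is to exploit that $\Inn(G)\leqslant\Aut_{\perm}(G)$, so that Lemma \ref{auxLem1}(1) immediately forces every element order of $\Inn(G)$ into $\{1,2,3\}$; moreover $G$ is nonabelian by Lemma \ref{transAbLem}(2) (being nonregular), hence $\Inn(G)=G/\zeta G\not=\{1\}$ and at least one of $2,3$ already occurs. It thus suffices to rule out that $\Inn(G)$ is a nontrivial $p$-group for a single prime $p\in\{2,3\}$. So suppose it is, and let $q$ be the other prime. Since $G$ is a $\{2,3\}$-group (Lemma \ref{auxLem1}(2)) and $G/\zeta G$ is a $p$-group, the Sylow $q$-subgroup of $G$ is central, giving $G=G_p\times Z_q$ with $Z_q\leqslant\zeta G$. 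Fixing $\omega\in\Omega$, coprimality shows the point stabiliser $G_\omega$ (core-free by Lemma \ref{transAbLem}(1)) is a $p$-group, so $G_\omega\leqslant G_p$, and by Lemma \ref{autPermLem} we get $\Inn(G_p)\times\Aut(Z_q)\leqslant\Aut_{\perm}(G)$. If $Z_q\notin\{1,\IZ/2\IZ\}$, then by \cite[Proposition 3.1.1]{Bor20a} there is $a\in Z_q$ with $|a^{\Aut(Z_q)}|\geqslant2$; taking $g\in G_p\setminus\zeta G_p$ (possible as $G_p$ is nonabelian), the element $ga$ has an $\Aut_{\perm}(G)$-orbit of length at least $|g^{G_p}|\cdot|a^{\Aut(Z_q)}|\geqslant p\cdot2\geqslant4$, contradicting $\maol_{\perm}(G)=3$. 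Hence $Z_q\in\{1,\IZ/2\IZ\}$: for $p=2$ this forces $Z_3=\{1\}$, so $G$ is a $2$-group, while for $p=3$ we are left with $G=G_3$ or $G=G_3\times\IZ/2\IZ$, and since $\IZ/2\IZ$ has trivial automorphism group this central factor affects neither the relevant automorphisms nor the orbit lengths, so we may take $G$ to be a $3$-group.

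For $p=2$ I would argue cleanly as follows. Since $\maol_{\perm}(G)=3$, some orbit has length exactly $3$, so $3\mid|\Aut_{\perm}(G)|$ and there is $\tau\in\Aut_{\perm}(G)$ of order $3$ (exactly $3$ by Lemma \ref{auxLem1}(1)). As $\tau$ has order coprime to $|G|$, and the kernel of $\Aut(G)\to\Aut(G/\Phi(G))$ is a $2$-group, $\tau$ acts nontrivially on $V:=G/\Phi(G)$. I claim $\tau$ fixes $\bar g$ for every $g\in G\setminus\zeta G$: otherwise $g,\tau g,\tau^2 g$ are distinct modulo $\Phi(G)$, so the $\Aut_{\perm}(G)$-orbit of $g$ is exactly $\{g,\tau g,\tau^2 g\}$, and the conjugacy class $g^G\subseteq\{g,\tau g,\tau^2 g\}$ has length a power of $2$ which is at most $3$ and (as $g\notin\zeta G$) at least $2$, hence equals $2$; writing $g^G=\{g,gc\}$ with $c\in G'\leqslant\Phi(G)$ gives $\overline{gc}=\bar g$, whereas $gc\in\{\tau g,\tau^2 g\}$ has image $\not=\bar g$, a contradiction. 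But $G\setminus\zeta G$ generates $G$, so its images span $V$; thus $\tau$ fixes a spanning set and acts trivially on $V$, contradicting the previous sentence. Therefore $\Inn(G)$ is not a $2$-group.

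The case $p=3$ is the main obstacle, and here the above shortcut genuinely fails: an automorphism of order $3$ is no longer coprime to the order of the $3$-group $G$, so the Burnside-type control over the action on $G/\Phi(G)$ disappears (the order-$3$ element may stabilise the entire central chain). Instead I would mirror the argument of Proposition \ref{maol2Prop} with the prime $2$ replaced by $3$. Since all conjugacy classes of the $3$-group $G$ have length $1$ or $3$, the classical structure theory of $p$-groups of conjugate rank $1$ gives that $G$ is nilpotent of class $2$ with $\Inn(G)$ elementary abelian and $G'\cong\IZ/3\IZ$. One then shows, exactly as in Proposition \ref{maol2Prop}, that if $\zeta G$ were noncyclic or $G/G'$ were not elementary abelian one could construct four distinct central automorphisms lying in $\Aut_{\perm}(G)$ (via Lemma \ref{autPermLem}, choosing $f$ to vanish on $G_\omega$) and moving a suitable element to four positions, forcing $\maol_{\perm}(G)\geqslant4$. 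With $\zeta G$ cyclic, $G'\cong\IZ/3\IZ$ and $G/G'$ elementary abelian in hand, the standard-tuple count of Proposition \ref{maol2Prop} bounds the rank $d:=\log_3|G/G'|$, reducing to finitely many small $3$-groups, and a check in the transitive-groups library confirms that none of them has $\maol_{\perm}(G)=3$. This rules out $\Inn(G)$ being a $3$-group and completes the proof; I expect that verifying the structural reductions in this asymmetric $p=3$ case, where Burnside's basis theorem is unavailable, will be the most delicate part.
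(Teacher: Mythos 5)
Your proof is correct, and its overall architecture coincides with the paper's: both reduce the negation of the statement to \enquote{$\Inn(G)$ is a nontrivial $p$-group for some $p\in\{2,3\}$}, strip off the central Hall $p'$-part via the same $\Inn(G_p)\times\Aut(Z_q)$ orbit computation, and dispose of the $3$-group case by transplanting the structural reductions ($\zeta G$ cyclic, $|G'|=3$, $G/G'$ elementary abelian) and the standard-tuple count of Proposition \ref{maol2Prop} to the prime $3$, followed by a check in the transitive-groups library --- which is exactly the paper's route, including the passage from $G_3\times\IZ/2\IZ$ back to a $3$-group. The genuine divergence is the $2$-group case: the paper disposes of it by invoking the analogous argument of \cite[proof of Lemma 3.3.2, Case (1)]{Bor20a}, whereas you give a short self-contained argument: the orbit of length exactly $3$ yields, via Cauchy, some $\tau\in\Aut_{\perm}(G)$ of order $3$, which must act nontrivially on $G/\Phi(G)$ because the kernel of $\Aut(G)\rightarrow\Aut(G/\Phi(G))$ is a $2$-group, yet is forced to fix the Frattini image of every non-central element, since a $3$-element orbit $\{g,\tau g,\tau^2g\}$ meeting three distinct Frattini cosets cannot contain the length-$2$ conjugacy class $\{g,gc\}$ with $c\in G'\leqslant\Phi(G)$. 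This coprime-action argument is arguably cleaner and more portable than the citation of \cite{Bor20a}; note that it leans on $\maol_{\perm}(G)$ being \emph{exactly} $3$ (to produce $\tau$), and, as you correctly observe, it has no analogue for $p=3$, which is why both you and the paper fall back on the heavier counting machinery there.
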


\begin{proof}
Assume otherwise. By Lemma \ref{transAbLem}(2), $G$ is nonabelian, and so by Lemma \ref{auxLem1}, $\Exp(\Inn(G))\in\{2,3\}$; in any case, $\Inn(G)$, and thus $G$ itself, is nilpotent. By Lemma \ref{auxLem1}(2), we can write $G=G_2\times G_3$ where $G_p$ denotes the unique Sylow $p$-subgroup of $G$ for $p\in\{2,3\}$. Fix a point $\omega\in\Omega$ and consider the point stabiliser $G_{\omega}\leqslant G$. We make a case distinction:
\begin{enumerate}
\item Case: $\Exp(\Inn(G))=2$. Then $G_2$ is nonabelian and $G_3$ is abelian (and thus central in $G$). Since $G_{\omega}$ is core-free in $G$, it follows that $G_{\omega}\cap G_3=\{1_G\}$, and thus $G_{\omega}\leqslant G_2$. Therefore, $\Inn(G_2)\times\Aut(G_3)$ embeds naturally into $\Aut_{\perm}(G)$, and so if $|G_3|>1$, we could conclude that $\maol_{\perm}(G)\geqslant 2\cdot 2=4>3$, a contradiction. Consequently, $G_3$ is trivial, and so $G=G_2$ is a nonabelian $2$-group with $\maol_{\perm}(G)=3$. An argument analogous to the one in \cite[proof of Lemma 3.3.2, Case (1)]{Bor20a} yields the final contradiction for this case.
\item Case: $\Exp(\Inn(G))=3$. Then $G_2$ is abelian (hence central in $G$), whereas $G_3$ is nonabelian. We have $G_{\omega}\leqslant G_3$, and so, viewing $H:=G_3$ as a permutation group via the inclusions $H\leqslant G\leqslant\Sym(\Omega)$, we find that $H_{\omega}=G_{\omega}$ and $\maol_{\perm}(H)=3$. As in \cite[proof of Lemma 3.3.2, Case (2)]{Bor20a}, we conclude that $\Inn(H)$ is abelian, i.e., that the nilpotency class of $H$ is $2$. Moreover, in view of $\Exp(\Inn(H))=3$, we conclude that $\Inn(H)$ is an elementary abelian $3$-group. One can now show the following facts, in the listed order and analogously to the proof of Proposition \ref{maol2Prop}:
\begin{itemize}
\item $H_{\omega}$ is an elementary abelian $3$-group, embedded into $H/\zeta H\cong\Inn(H)$ via the canonical projection $H\rightarrow H/\zeta H$.
\item $|H_{\omega}|<|H/\zeta H|$.
\item $\zeta H$ is cyclic.
\item $|H'|=3$.
\item $H/H'$ is an elementary abelian $3$-group.
\end{itemize}
We can use these restrictions on $H$ to carry out an analogue of the \enquote{standard tuples} argument from the proof of Proposition \ref{maol2Prop} (only needing to replace the prime $2$ by $3$), which allows us to conclude that with $d:=\log_3{|H/H'|}$ and $t:=\log_3{|H_{\omega}|}$, we have
\begin{equation}\label{3tupleEq}
(1-\frac{1}{3^t})\cdot 3^{{t \choose 2}+{{d-t+1} \choose 2}}\cdot\prod_{i=1}^{t-1}{(1-\frac{1}{3^i})}\cdot\prod_{j=1}^{d-t}{(1-\frac{1}{3^j})} \leqslant 3^d,
\end{equation}
in particular
\[
3^d \geqslant \frac{2}{3}\cdot 3^{\frac{d}{4}(\frac{d}{2}-1)}\cdot(\prod_{i=1}^{\infty}{(1-\frac{1}{3^i})})^2 \geqslant \frac{2}{3}\cdot 3^{\frac{d^2}{8}-\frac{d}{4}}\cdot 0.56^2,
\]
which only holds for $d\leqslant11$. Moreover, among all pairs $(d,t)$ with $d\in\{2,3,\ldots,11\}$ and $t\in\{1,2,\ldots,d-1\}$, the stronger inequality from Formula (\ref{3tupleEq}) only holds for
\[
(d,t)\in\{(2,1),(3,1),(3,2),(4,2),(4,3)\},
\]
so that $\deg(H)=3^{d-t+1}\leqslant 3^3=27$. However, according to the library of finite transitive permutation groups of small degree \cite{GAP4,TransGrp}, there are no transitive permutation groups $H$ of degree at most $27$ such that
\begin{itemize}
\item $H$ is a nonabelian $3$-group of class $2$,
\item $\zeta H$ is cyclic,
\item $H/H'$ is elementary abelian, and
\item $\maol_{\perm}(H)=3$,
\end{itemize}
a contradiction to the case assumption.\qedhere
\end{enumerate}
\end{proof}

We are now ready to prove Proposition \ref{maol3Prop}.

\begin{proof}[Proof of Proposition \ref{maol3Prop}]
The implication \enquote{(2)$\Rightarrow$(1)} is easy, so we focus on the proof of \enquote{(1)$\Rightarrow$(2)}. The regular case is dealt with in \cite[Proposition 3.3.3]{Bor20a}, so we assume that $G$ is nonregular. By Lemma \ref{auxLem2}, the set of element orders of $\Inn(G)$ is $\{1,2,3\}$, and as in \cite[proof of Proposition 3.3.3]{Bor20a}, this allows us to conclude that $\Inn(G)\cong\Sym(3)$. Fix a point $\omega\in\Omega$, and consider the point stabiliser $G_{\omega}\leqslant G$. Since $\Inn(G)\cong G/\zeta G$ and $G_{\omega}\cap\zeta G=\{1_G\}$, we find that $G_{\omega}$ is embedded into $\Sym(3)$ via the canonical projection $G\rightarrow G/\zeta G$. We make a case distinction:
\begin{enumerate}
\item Case: $G_{\omega}\cong\Sym(3)$. Then $G=G_{\omega}\times\zeta G$, which is impossible, since $G_{\omega}$ is core-free in $G$.
\item Case: $G_{\omega}\cong\IZ/3\IZ$. We claim that $\zeta G$ is a $3$-group. Indeed, assuming that $2$ divides $|\zeta G|$, there is a group homomorphism chain of the form
\[
f:G\overset{\text{can.}}\twoheadrightarrow G/\zeta G\overset{\sim}\rightarrow\Sym(3)\twoheadrightarrow\IZ/2\IZ\hookrightarrow\zeta G.
\]
The corresponding central automorphism $\alpha_f$ of $G$ lies in $\Aut_{\perm}(G)$ and maps any fixed element $g\in G$ that projects onto an order $2$ element in $\Sym(3)$ to a different element in the same central coset. Therefore and since the image of $g$ in $G/\zeta G$ has conjugacy class length $3$, it follows that $\maol_{\perm}(G)\geqslant 2\cdot 3>3$, a contradiction. This concludes the proof that $\zeta G$ is a $3$-group, and as in \cite[proof of Proposition 3.3.3]{Bor20a}, we can infer from this that $G=\zeta G\times\Sym(3)$.

We next claim that $|\zeta G|\leqslant 3$. Assume otherwise. With respect to a fixed direct decomposition of $G$ of the form $\zeta G\times\Sym(3)$, denote by $P$ the projection of $G_{\omega}$ to $\zeta G$. Note that $|P|=3$, because if $P$ is trivial, then $G_{\omega}$ is normal and hence not core-free in $G$. Moreover, denoting by $\tau$ a fixed element of order $2$ in $\Sym(3)$, observe that $\Stab_{\Aut(\zeta G)}(P)$ embeds naturally into $\Aut_{\perm}(G)$, via the injective group homomorphism
\begin{align*}
&\alpha \mapsto \begin{cases}((z,\sigma) \mapsto (z^{\alpha},\sigma)), & \text{if }\alpha\in\C_{\Aut(\zeta G)}(P), \\ ((z,\sigma)\mapsto (z^{\alpha},\sigma^{\tau})), & \text{otherwise}\end{cases} \\
&\text{for all }\alpha\in\Stab_{\Aut(\zeta G)}(P),z\in\zeta G,\sigma\in\Sym(3).
\end{align*}
Therefore, $\maol_{\perm}(G)$ is at least the maximum orbit length of $\Stab_{\Aut(\zeta G)}(P)$ on $\zeta G$, which we will show to be strictly larger than $3$ in the following subcase distinction:
\begin{enumerate}
\item Subcase: $\zeta G$ is cyclic. Then $P$ is characteristic in $\zeta G$, so $\Stab_{\Aut(\zeta G)}(P)=\Aut(\zeta G)$. Therefore, writing $|\zeta G|=3^k$ (with $k\geqslant2$), we conclude that the maximum orbit length of $\Stab_{\Aut(\zeta G)}(P)$ on $\zeta G$ is $\phi(3^k)=3^{k-1}\cdot 2\geqslant 6>3$.
\item Subcase: $\zeta G$ is not cyclic. By Lemma \ref{elAbBasisLem}, there is a basis $(z_1,\ldots,z_r)$ of $\zeta G$ and an $i\in\{1,\ldots,r\}$ such that $P\leqslant\langle z_i\rangle$. Therefore, if $\ord(z_j)=3^k>3$ for some $j\in\{1,\ldots,r\}$, then $\Stab_{\Aut(\zeta G)}(P)$ has an orbit of length $\phi(3^k)=3^{k-1}\cdot 2\geqslant6>3$, a contradiction. Hence $\zeta G\cong(\IZ/3\IZ)^r$, so that the maximum orbit length of $\Stab_{\Aut_{\zeta G}}(P)$ on $\zeta G$ is $3^r-3\geqslant6>3$, another contradiction.
\end{enumerate}
This concludes the proof that $|\zeta G|\leqslant 3$. However, note that if $|\zeta G|=1$, then $G_{\omega}$ is normal and thus not core-free in $G$. It follows that $G$ is of order $18$ and of degree $6$, and by the library of finite transitive permutation groups of small degree \cite{GAP4,TransGrp}, there are no transitive permutation groups with this combination of order and degree and with maximum normaliser orbit length $3$, a contradiction.
\item Case: $G_{\omega}\cong\IZ/2\IZ$. In what follows, for a finite group $H$ and a prime $p$, if $H$ has a unique Sylow $p$-subgroup, we denote that subgroup by $H_p$. Let $A\leqslant G$ be the preimage of the unique index $2$ subgroup of $\Sym(3)$ under the canonical projection $G\rightarrow G/\zeta G\cong\Sym(3)$. Then $A$ is an abelian subgroup of index $2$ in $G$, and we have $G=A\rtimes G_{\omega}$ as well as
\[
A=(\zeta G)_2\times G_3=(\zeta G)_2\times(\zeta G)_3\times{\IZ/3\IZ}=\zeta G\times{\IZ/3\IZ};
\]
to see that $G_3=(\zeta G)_3\times{\IZ/3\IZ}$, follow the argument in \cite[proof of Proposition 3.3.3]{Bor20a}. Consequently, every automorphism of $\zeta G$ extends to an element of $\Aut_{\perm}(G)$, whence $3\geqslant\maol_{\perm}(G)\geqslant\maol(\zeta G)$, which in view of \cite[Theorem 1.1(1)]{Bor20a} implies that $|\zeta G|\in\{1,2,3,4,6\}$. It follows that
\[
(\deg(G),|G|) \in \{(3,6),(6,12),(9,18),(12,24),(18,36)\}.
\]
We conclude this proof by noting that according to the library of finite transitive permutation groups of small degree \cite{GAP4,TransGrp}, the only nonregular finite transitive permutation groups having one of these degree-size combinations as well as maximum normaliser orbit length $3$ are the groups $\D_{2n}\leqslant\Sym(n)$ for $n\in\{3,6\}$. \qedhere
\end{enumerate}
\end{proof}

\section{Proof of Theorem \ref{mainTheo}(2)}\label{sec4}

This is easy modulo the work from \cite[Section 4]{Bor20a}. As in \cite[Definition 4.1]{Bor20a}, denote by $G_n$ the finite $2$-group given by the following (power-commutator) presentation:
\begin{align*}
\langle x_1,\ldots,x_{2^n+1},a,b \mid  \,\,\,&[a,b]=[x_i,a]=[x_i,b]=1,[x_{2i-1},x_{2i}]=a,[x_{2i},x_{2i+1}]=b, \\
&[x_i,x_j]=1\text{ if }|i-j|>1,x_1^2=x_{2^n+1}^2=b, \\
&a^2=b^2=x_i^2=1\text{ if }1<i<2^n+1\rangle.
\end{align*}
We note the following known facts about the group $G_n$:
\begin{enumerate}
\item The order of $G_n$ is $2^{2^n+3}$, see \cite[Remark 4.2(1)]{Bor20a}.
\item The centre of $G_n$ is $\langle a,b\rangle$ and is of order $4$, see \cite[Remark 4.2(1)]{Bor20a}.
\item The group $\Aut_{\cent}(G_n)$ of central automorphisms of $G_n$ acts transitively on each nontrivial coset of $\zeta G_n$ in $G_n$, see \cite[beginning of the proof of Proposition 4.3]{Bor20a}.
\item $\Aut(G_n)=\Aut_{\cent}(G_n)\cup\Aut_{\cent}(G_n)\alpha_n$, where $\alpha_n$ is the automorphism of $G_n$ given by $a\mapsto a$, $b\mapsto b$, $x_i\mapsto x_i$ for $i\not=2^n$ and $x_{2^n}\mapsto x_{2^n}x_{2^n+1}$, see \cite[Remark 4.2(3) and Proposition 4.3]{Bor20a}.
\end{enumerate}
Now, consider the subgroup $H_n:=\langle x_{2^n}\rangle\cong\IZ/2\IZ$ of $G_n$, and view $G_n$ as a transitive permutation group via its action by right multiplication on the right cosets of $H_n$. Observe that
\[
H_n^{G_n}=\{H_n,\langle x_{2^n}a\rangle,\langle x_{2^n}b\rangle,\langle x_{2^n}ab\rangle\}=H_n^{\Aut_{\cent}(G_n)}.
\]
Therefore, using Lemma \ref{autPermLem} and fact (4) above, we find that $\Aut_{\perm}(G_n)=\Aut_{\cent}(G_n)$. In view of this and facts (2) and (3) from above, we conclude that $\maol_{\perm}(G_n)=4$. Since this holds for all $n\in\IN^+$, the statement of Theorem \ref{mainTheo}(2) now follows from fact (1).

\section{Proof of Theorems \ref{mainTheo}(3) and \ref{improvedBoundTheo}}\label{sec5}

We start with the proof of Theorem \ref{improvedBoundTheo}, which is easy modulo the following slightly weaker and reformulated version of Ledermann and Neumann's result \cite[Theorem 6.6]{LN56a}:

\begin{lemma}\label{lnLem}
Let $G$ be a finite group, and assume that $|\Aut(G)|\leqslant n$. Then $|G|\leqslant n^{n(1+\lfloor\log_2{n}\rfloor)}+1$.
\end{lemma}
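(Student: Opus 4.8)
The plan is to reduce the desired bound on $|G|$ to a bound on the centre, and then to control the centre through its exponent and rank. First I would use that $\Inn(G)\cong G/\zeta G$ embeds into $\Aut(G)$, so that $|G/\zeta G|=|\Inn(G)|\le n$. Writing $Z:=\zeta G$, it then suffices to bound $|Z|$, after which
\[
|G|=|Z|\cdot|G/\zeta G|\le n\cdot|Z|.
\]
Since $Z$ is a finite abelian group, it is an internal direct product of at most $d(Z)$ cyclic groups, each of order dividing $\Exp(Z)$, whence $|Z|\le\Exp(Z)^{d(Z)}$. Thus the whole problem reduces to bounding $\Exp(Z)$ and the rank $d(Z)$ by explicit functions of $n$, and the stated exponent $n(1+\lfloor\log_2 n\rfloor)$ should emerge from assembling these two bounds (with the additive $1$ absorbing the degenerate case $G\cong\IZ/2\IZ$, where $\Aut(G)$ is trivial).

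The key difficulty, and the reason this is not immediate, is that automorphisms of $Z$ need not extend to automorphisms of $G$, so one cannot simply invoke $|\Aut(Z)|\le n$. To get around this I would split $Z$ according to the characteristic subgroup $Z\cap G'$. On the complementary part, realised inside the abelianisation $G/G'$, one has access to the central automorphisms $\alpha_f$ associated with homomorphisms $f\colon G\to Z$ (as recalled at the end of Section \ref{sec2}): for a central element $z$ of prime-power order $p^k$ generating a cyclic direct factor of $Z$ with $\langle z\rangle\cap G'=\{1_G\}$, suitable such $f$ realise enough of the power maps $z\mapsto z^a$ with $\gcd(a,p)=1$, and independent socle directions support independent central automorphisms; since all the resulting automorphisms live in $\Aut(G)$, whose order is $n$, this simultaneously bounds the relevant prime-power orders (via $\phi(p^k)\le n$) and the rank of $Z/(Z\cap G')$.

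The remaining, genuinely subtle, contribution is the part $Z\cap G'$ lying inside the derived subgroup: central automorphisms are blind to it, so its exponent and rank must instead be bounded through the Schur multiplier $\M(G/\zeta G)$, which is controlled by $|G/\zeta G|\le n$. I expect this stem part to be the main obstacle, both because it forces one to import a multiplier estimate and because verifying that the desired power maps on the complementary part are actually induced by genuine automorphisms of $G$ requires care. Once $\Exp(Z)$ and $d(Z)$ are each bounded in terms of $n$, the remaining work is routine bookkeeping: one tracks the two estimates through $|Z|\le\Exp(Z)^{d(Z)}$ and $|G|\le n\cdot|Z|$, crudely rounding so as to land on the exponent $n(1+\lfloor\log_2 n\rfloor)$ and the final $+1$.
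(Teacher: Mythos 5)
Your proposal and the paper's proof diverge completely in scope: the paper does not reprove anything, it simply observes that the stated bound is the contrapositive of the explicit theorem of Ledermann and Neumann \cite[Theorem 6.6]{LN56a} (if $|G|$ exceeded $n^{n(1+\lfloor\log_2 n\rfloor)}+1$, then $|G|\geqslant f(n+1)$ for the explicit function $f$ of that theorem, forcing $|\Aut(G)|\geqslant n+1$, a contradiction). What you have sketched is instead an attempt to reprove the Ledermann--Neumann theorem itself from first principles, and as it stands it has a genuine gap: the argument is a strategy outline whose two essential quantitative inputs are never supplied. Concretely, the reduction $|G|\leqslant n\cdot|\zeta G|$ and $|\zeta G|\leqslant\Exp(\zeta G)^{d(\zeta G)}$ is fine, but you never produce explicit bounds on $\Exp(\zeta G)$ or $d(\zeta G)$ in terms of $n$. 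You yourself flag the portion $\zeta G\cap G'$ as ``the main obstacle'' (central automorphisms cannot see it, and one must instead invoke an estimate on the Schur multiplier of $G/\zeta G$) and flag the verification that the required power maps on a complement of $\zeta G\cap G'$ extend to genuine automorphisms of $G$ as ``requiring care''; neither point is resolved. Since the lemma asserts a specific numerical bound, asserting that the exponent $n(1+\lfloor\log_2 n\rfloor)$ ``should emerge'' from ``routine bookkeeping'' does not establish it --- the whole content of the statement lies in those constants, and without carrying out the multiplier and central-automorphism estimates explicitly you cannot know that the bookkeeping lands on this expression rather than a weaker one.

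If your intention is to give a self-contained proof, you are committing to essentially the full Ledermann--Neumann argument (which is precisely what produces their function $f$, calibrated so that this lemma pops out); that is a substantial undertaking and far more than the lemma requires in context. The economical route, and the one the paper takes, is to cite \cite[Theorem 6.6]{LN56a} and check the single inequality $n^{n(1+\lfloor\log_2 n\rfloor)}+2\geqslant f(n+1)$ against the explicit form of $f$ given there. Either fill in the two missing estimates with explicit constants, or replace the sketch by that citation.
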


\begin{proof}
Otherwise, we have
\[
|G|\geqslant n^{n(1+\lfloor\log_2{n}\rfloor)}+2\geqslant f(n+1),
\]
where $f$ is as in \cite[Theorem 6.6]{LN56a}. Hence by \cite[Theorem 6.6]{LN56a}, we have $|\Aut(G)|\geqslant n+1=|\Aut(G)|+1$, a contradiction.
\end{proof}

\begin{proof}[Proof of Theorem \ref{improvedBoundTheo}]
Let $\vec{g}$ be an arbitrary but fixed generating $d$-tuple of $G$. Consider the orbit $\Ocal:=\vec{g}^{\Aut(G)}$ of $\vec{g}$ under the (entry-wise) action of $\Aut(G)$. Combining the facts that the action of $\Aut(G)$ on the set of generating $d$-tuples of $G$ is semiregular and that all orbits of $\Aut(G)$ on $G$ are of length at most $c$, we find that
\[
|\Aut(G)|=|\Ocal|\leqslant c^d.
\]
The result now follows from Lemma \ref{lnLem}, applied with $n:=c^d$.
\end{proof}

Now we turn to the proof of Theorem \ref{mainTheo}(3). The meat of this proof lies in the following lemma:

\begin{lemma}\label{ledNeuLem}
Let $G$ be a finite transitive permutation group. Then, recalling the definition of the function $\ffrak$ from Notation \ref{mainNot}, we have
\[
|G|\leqslant\ffrak(d(G),|\Aut_{\perm}(G)|).
\]
\end{lemma}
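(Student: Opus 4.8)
The plan is to adapt Sambale's proof of the Ledermann--Neumann bound \cite{Sam19a} to the transitive setting, exploiting the two families of automorphisms that are guaranteed to lie inside $\Aut_{\perm}(G)$: inner automorphisms and central automorphisms fixing a point stabiliser. Write $n:=|\Aut_{\perm}(G)|$ and $d:=d(G)$, fix $\omega\in\Omega$ and set $S:=G_{\omega}$. Since $G\leqslant\N_{\Sym(\Omega)}(G)$, conjugation by $G$ shows $\Inn(G)\leqslant\Aut_{\perm}(G)$, whence $|G/\zeta G|=|\Inn(G)|\leqslant n$. Moreover $S\cap\zeta G$ is a normal subgroup of $G$ contained in the core-free subgroup $S$, so $S\cap\zeta G=\{1_G\}$ by Lemma \ref{transAbLem}(1); thus $S$ embeds into $G/\zeta G$ via the canonical projection and $|S|\leqslant n$. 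As $|G|=|\zeta G|\cdot|G/\zeta G|$, it remains to bound $|\zeta G|$, and for this it suffices to bound the rank and the exponent of the abelian group $\zeta G$.

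To access $\zeta G$ I would use central automorphisms fixing $S$ pointwise. For a homomorphism $f:G\to\zeta G$ with $S\leqslant\ker(f)$, whenever the associated $\alpha_f$ is an automorphism it fixes $S$ pointwise, so $S^{\alpha_f}=S$ is a point stabiliser and $\alpha_f\in\Aut_{\perm}(G)$ by Lemma \ref{autPermLem}. Such $f$ factor through the abelian quotient $T:=G/(G'S)$, which is a quotient of $G^{\ab}$ and hence $d$-generated, and distinct $f$ yield distinct $\alpha_f$ (since $\alpha_f$ determines $g\mapsto g^{-1}\alpha_f(g)=f(g)$); consequently the number of admissible $f\in\Hom(T,\zeta G)$ is at most $n$. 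The image $\widetilde{Z}$ of $\zeta G$ in $T$ then has rank at most $d$, while $\zeta G\cap G'S$ maps into the image of $S$ in $G^{\ab}$ (of size at most $|S|\leqslant n$) with kernel $\zeta G\cap G'$, so the ``lost'' part $|\zeta G\cap G'S|$ is bounded by $n\cdot|G'|$, and $|G'|$ is bounded in terms of $n$ by a classical theorem of Schur. This bounds the rank of $\zeta G$ in terms of $n$ and $d$, and reduces the problem to bounding $\Exp(\widetilde Z)$, since $\Exp(\zeta G)$ divides $\Exp(\widetilde Z)\cdot\Exp(\zeta G\cap G'S)$.

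To bound $\Exp(\widetilde Z)$ at a prime $p$ I would take an element of maximal $p$-power order, use Lemma \ref{elAbBasisLem} to place it inside a cyclic direct factor of the $p$-part of $T$, and lift it to a cyclic direct factor $\langle z\rangle$ of the $p$-part of $\zeta G$. Two explicit families of homomorphisms $f:T\to\langle z\rangle\leqslant\zeta G$ then control the two relevant invariants. First, homomorphisms whose image lies in the Frattini subgroup $\Phi(\zeta G)$ automatically give automorphisms (an endomorphism of a finite abelian group that is the identity modulo its Frattini subgroup is onto, hence bijective), and counting these against $n$ bounds the power $p^{e_p-1}$. Second, scaling the projection onto the order-$p$ quotient of $\langle z\rangle$ by the $p-1$ units modulo $p$ produces $p-1$ distinct admissible central automorphisms, forcing $p\leqslant n+1$. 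Together these bound $\Exp(\widetilde Z)$, and hence $\Exp(\zeta G)$, in terms of $n$, after which $|\zeta G|\leqslant\Exp(\zeta G)^{\rk(\zeta G)}$ and $|G|=|\zeta G|\cdot|G/\zeta G|$ yield the desired $\ffrak(d,n)$ once the constants are tracked.

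The hard part is exactly this last stage. In the abstract case one has \emph{all} central automorphisms available, and Ledermann and Neumann's original argument applies; here only those respecting a point stabiliser survive, which (as noted in Section \ref{sec5}) is why their proof does not adapt and Sambale's more robust counting is required. The delicate point is to guarantee that restricting to homomorphisms vanishing on $S$ costs only a factor bounded in terms of $n$ --- this is what $|S|\leqslant n$ secures --- and then to push Sambale's quantitative estimates through the quotient $T$ with enough bookkeeping to produce an explicit bound. It is this accumulation of crude but explicit estimates that accounts for the large size of $\ffrak$ in comparison with the much cleaner bound of Theorem \ref{improvedBoundTheo}.
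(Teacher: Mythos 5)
Your proposal is correct in outline but takes a genuinely different route from the paper's. The paper stays close to Sambale's decomposition $|G|=|G'|\cdot|G:G'|$: it imports $|G'|\leqslant n^{2n^3}$ from \cite[Lemma 2]{Sam19a} (which only needs $|\Inn(G)|\leqslant n$), and then spends most of its effort bounding $\Exp(G/G')$ by producing a power map automorphism of $G/G'$ of large order, lifting it (via a pigeonhole count of power-automorphism-commutator tuples attached to standard tuples) to some $\alpha\in\C_{\Aut(G)}(G')$ of large order, and --- this is the key permutation-group twist --- observing that although $\alpha$ need not stabilise $G_{\omega}$, it does stabilise the image of $G_{\omega}$ in $G/G'$, so the $\langle\alpha\rangle$-orbit of $G_{\omega}$ has length at most $|G'|^{d}$ and a suitable power of $\alpha$ lands in $\Aut_{\perm}(G)$ while retaining large order; finally $|G:G'|\leqslant\Exp(G/G')^{d}$. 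You instead use the decomposition $|G|=|\zeta G|\cdot|G:\zeta G|$, get $|G:\zeta G|,\,|S|\leqslant n$ for free, and attack $|\zeta G|$ with central automorphisms $\alpha_f$ built from homomorphisms vanishing on $G'S$. Since these fix $S$ pointwise, membership in $\Aut_{\perm}(G)$ is automatic, so the obstruction the paper flags for the Ledermann--Neumann approach (controlling how a central automorphism moves the point stabiliser) simply does not arise; the price is the bounded correction term $\zeta G\cap G'S$ of order at most $n|G'|$, which you absorb into the rank and exponent estimates. Your Frattini-image device for guaranteeing that $\alpha_f$ is an automorphism is sound (its restriction to $\zeta G$ is the identity modulo $\Phi(\zeta G)$, hence surjective, and $\ker\alpha_f\leqslant\zeta G$), and the injectivity of $f\mapsto\alpha_f$ gives the needed count against $n$. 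Your approach is arguably closer in spirit to Theorem \ref{improvedBoundTheo} and, if the bookkeeping is done, should yield a bound of roughly the shape $n\cdot\bigl(\Exp(\widetilde Z)\cdot n|G'|\bigr)^{d+\log_2(n|G'|)}$, which appears to fall below $\ffrak(d,n)$; since the lemma asserts the specific function $\ffrak$, you must either verify this inequality or restate the lemma with your own explicit function.

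Three loose ends to tidy. First, Lemma \ref{elAbBasisLem} is about elementary abelian subgroups and is not the right tool for your lifting step; what you actually need is that an element of maximal order in a finite abelian $p$-group generates a direct factor (\cite[Lemma 5]{Sam19a}), and in fact you can avoid lifting altogether by mapping $T$ into $\langle z^{p}\rangle$ for $z\in\zeta G$ of maximal $p$-power order and comparing $\Exp(\widetilde Z)_p$ with $\Exp(T)_p$. Second, in the scaling argument bounding the primes, one of the $p-1$ scalings may invert a nontrivial central element and so fail to give an automorphism, and the argument needs $p$ to divide both $|T|$ and $|\zeta G|$; the paper's argument (if $p\nmid|G/\zeta G|$ then $G=(\zeta G)_p\times Q$ with $G_{\omega}\leqslant Q$, so all of $\Aut((\zeta G)_p)$ embeds into $\Aut_{\perm}(G)$) is cleaner and can be substituted. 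Third, when assembling $\Exp(\zeta G)$ you must apply the correction $\Exp(\zeta G\cap G'S)\leqslant n|G'|$ once globally, via $\Exp(\zeta G)\mid\Exp(\widetilde Z)\cdot\Exp(\zeta G\cap G'S)$, rather than once per prime; otherwise the $|G'|$ factor gets raised to the number of prime divisors and the final bound can exceed $\ffrak(d,n)$ for small $d$.
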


As mentioned at the end of Section \ref{sec1}, our proof of Lemma \ref{ledNeuLem} is a modification of the short and self-contained proof due to Sambale \cite{Sam19a} that the order of an abstract finite group $G$ is bounded in terms of $|\Aut(G)|$. In contrast to Ledermann and Neumann's proof of their explicit result \cite[Theorem 6.6]{LN56a}, Sambale's argument first focuses on the commutator subgroup $G'$ and the abelianisation $G/G'$ and only turns to structural considerations concerning $\zeta G$ at its end. Unfortunately, it seems unclear whether Ledermann and Neumann's argument, which provides better explicit bounds than Sambale's proof, can be adapted to transitive permutation groups. Note that the main idea of Ledermann and Neumann's proof is to construct central automorphisms of $G$ by extending suitable automorphisms of $\zeta G$. More precisely, these automorphisms of $\zeta G$ must centralise $X\cap\zeta G$ where $X$ is a subgroup of $G$ mapping onto $G/\zeta G$ under the projection $G\rightarrow G/\zeta G$, and $X$ is chosen such that $X\cap\zeta G$ is of bounded order. The additional condition that the constructed central automorphisms shall map the (core-free) point stabiliser $G_{\omega}$ to a conjugate translates into additional mapping conditions on their restrictions to $\zeta G$, which do not seem to be well-controlled.

For our proof of Lemma \ref{ledNeuLem}, we will need the following concepts, which are from \cite[Definitions 5.6 and 5.8]{Bor20a}:

\begin{definition}\label{standardDef}
Consider the following concepts.
\begin{enumerate}
\item Let $p$ be a prime, and let $P$ be a finite abelian $p$-group. For $n\in\IN^+$ with $n\geqslant d(P)$, a \emph{length $n$ standard generating tuple of $P$} is an $n$-tuple $(x_1,\ldots,x_n)\in P^n$ such that $(x_1,\ldots,x_{d(P)})$ is a basis of $P$ (in the sense of Definition \ref{abelianBasisDef}) and $x_i=1_P$ for $i=d(P)+1,\ldots,n$.
\item Let $H$ be a finite abelian group. A \emph{standard generating tuple of $H$} is a $d(H)$-tuple $\vec{h}\in H^{d(H)}$ such that for each prime divisor $p$ of $|H|$, the entry-wise projection of $\vec{h}$ to the Sylow $p$-subgroup $H_p$ of $H$ is a standard generating tuple of $H_p$.
\item Let $G$ be a finite group. A \emph{standard tuple in $G$} is a $d(G/G')$-tuple with entries in $G$ and whose entry-wise image under the canonical projection $G\rightarrow G/G'$ is a standard generating tuple of $G/G'$.
\end{enumerate}
\end{definition}

\begin{definition}\label{pacDef}
Let $G$ be a finite group, let $n:=d(G/G')$, and let $\vec{g}=(g_1,\ldots,g_n)$ be a standard tuple in $G$.
\begin{enumerate}
\item The \emph{power-automorphism-commutator tuple associated with $\vec{g}$} is the $(2n+{n\choose 2})$-tuple
\[
(\pi_1,\ldots,\pi_n,\alpha_1,\ldots,\alpha_n,\gamma_{1,1},\gamma_{1,2},\ldots,\gamma_{1,n},\gamma_{2,3},\gamma_{2,4},\ldots,\gamma_{2,n},\ldots,\gamma_{n-1,n})
\]
with entries in $G'\cup\Aut(G')$ such that
\begin{itemize}
\item $\pi_i=g_i^{\ord_{G/G'}(g_iG')}\in G'$ for $i=1,\ldots,n$,
\item $\alpha_i\in\Aut(G')$ is the automorphism induced through conjugation by $g_i$ for $i=1,\ldots,n$, and
\item $\gamma_{i,j}=[g_i,g_j]\in G'$ for $1\leqslant i<j\leqslant n$.
\end{itemize}
\item Two standard tuples in $G$ are called \emph{equivalent} if and only if they have the same associated power-automorphism-commutator tuple.
\end{enumerate}
\end{definition}

We note that if $G$ is a finite group and $\vec{g},\vec{h}$ are equivalent standard tuples in $G$, then there is an $\alpha\in\C_{\Aut(G)}(G')$ such that $\vec{h}=(\vec{g})^{\alpha}$, see \cite[Remark 5.9]{Bor20a}. Another concept that will appear in the proof of Lemma \ref{ledNeuLem} is the following:

\begin{definition}\label{powerAutDef}
Let $G$ be a group. A \emph{power map automorphism of $G$} is an automorphism of $G$ of the form $g\mapsto g^e$ for all $g\in G$ and a fixed $e\in\IZ$. The power map automorphisms of $G$ form a subgroup of $\Aut(G)$ denoted by $\Aut_{\pow}(G)$.
\end{definition}

Note that this concept is distinct from the more general one of a \emph{power automorphism of $G$}, i.e., an automorphism of $G$ that stabilises every subgroup of $G$. Moreover, observe that if $G$ is a finite abelian group, then since $G$ has a cyclic direct factor of order $\Exp(G)$, we have
\[
\Aut_{\pow}(G)\cong\Aut_{\pow}(\IZ/\Exp(G)\IZ)=\Aut(\IZ/\Exp(G)\IZ).
\]

\begin{proof}[Proof of Lemma \ref{ledNeuLem}]
For notational simplicity, we will set $d:=d(G)\in\IN$ and $n:=|\Aut_{\perm}(G)|\in\IN^+$. Moreover, we fix a point $\omega\in\Omega$ and consider the associated point stabiliser $G_{\omega}\leqslant G$. We proceed in several steps.

First, we note that
\begin{equation}\label{gPrimeEq}
|G'|\leqslant n^{2n^3},
\end{equation}
which is the conclusion of \cite[Lemma 2]{Sam19a}. Reading through the proof of \cite[Lemma 2]{Sam19a}, we find that it only uses the assumption that $|\Inn(G)|\leqslant n$, which also holds in our case.

Next, we will show that each prime divisor $p$ of $|G|$ is at most $n+1$. We follow the proof of \cite[Lemma 3]{Sam19a}. If $|G/\zeta G|_p\not=1$, then $p\leqslant n$, since $\Inn(G)\leqslant\Aut_{\perm}(G)$. Otherwise, $|\zeta G|_p=|G|_p$ and $G=(\zeta G)_p\times Q$ by \cite[Theorem 3.3.1]{KS04a}, where $Q$ is a $p'$-group. Since $G_{\omega}$ is core-free in $G$, we have $G_{\omega}\cap(\zeta G)_p=\{1_G\}$, and thus $G_{\omega}\leqslant Q$ by the coprimality of $|(\zeta G)_p|$ and $|Q|$. Hence every automorphism of $(\zeta G)_p$ extends to an automorphism of $G$ in $\Aut_{\perm}(G)$, whence $n\geqslant|\Aut((\zeta G)_p)|\geqslant p-1$, as required.

Our next goal is to derive a certain upper bound on $\Exp(G/G')$. Observe that $\Exp(G/G')=\prod_p{\Exp((G/G')_p)}$ where $p$ ranges over the prime divisors of $|G:G'|$ and $(G/G')_p$ denotes the Sylow $p$-subgroup of $G/G'$. Since the number of factors in this product is at most $n+1$ by the previous paragraph, we find that for a suitable prime divisor $p$ of $|G:G'|$, we have $\Exp((G/G')_p)\geqslant\Exp(G/G')^{1/(n+1)}$. By the remark preceding this proof and the structure of automorphism groups of cyclic groups, this implies that $\Aut_{\pow}(G/G')$ contains an element $\beta$ of order at least
\[
\frac{1}{2}\phi(\Exp((G/G')_p)) \geqslant \frac{1}{4}\Exp((G/G')_p)^{1/2} \geqslant \frac{1}{4}\Exp(G/G')^{1/(2n+2)};
\]
for the lower bound on $\phi(\Exp((G/G')_p))$, see \cite[Lemma 2.4]{FS89a}. Fix a standard generating tuple $\vec{a}$ of $G/G'$ and consider the orbit
\[
\Ocal:=(\vec{a})^{\langle \beta\rangle}.
\]
Since the action of $\Aut(G/G')$ on generating tuples is semiregular, we find that
\[
|\Ocal|=\ord(\beta)\geqslant\frac{1}{4}\Exp(G/G')^{1/(2n+2)}.
\]
For each $k=0,1,\ldots,\ord(\beta)-1$, fix a lift $\vec{g}_k$ of $(\vec{a})^{\beta^k}$ in $G$ and collect those lifts in a set
\[
\Ocal':=\{\vec{g}_k \mid k=0,1,\ldots,\ord(\beta)-1\}.
\]
By definition, $\Ocal'$ is a set of standard tuples in $G$ of size
\[
|\Ocal'|=|\Ocal|=\ord(\beta)\geqslant\frac{1}{4}\Exp(G/G')^{1/(2n+2)}.
\]
By Formula (\ref{gPrimeEq}) and since a finite group of order $o$ has at most $o^{\log_2{o}}$ automorphisms (see e.g.~\cite[Lemma 5.5]{Bor20a}), the number of equivalence classes of standard tuples in $G$ is at most
\[
|G'|^d\cdot|\Aut(G')|^d\cdot|G'|^{{d\choose 2}} \leqslant n^{n^3d(1+d+4n^3\log_2{n})}.
\]
It follows that there is a nonempty subset $\Ocal_{\ast}\subseteq\Ocal'$ consisting of pairwise equivalent standard tuples in $G$ and such that
\[
|\Ocal_{\ast}|\geqslant\frac{\Exp(G/G')^{1/(2n+2)}}{4n^{n^3d(1+d+4n^3\log_2{n})}}.
\]
Fix $\vec{g}\in\Ocal_{\ast}$, and denote by $A$ the set of all $\alpha\in\C_{\Aut(G)}(G')$ such that $\vec{g}^{\alpha}\in\Ocal_{\ast}$. Then
\[
|A|=|\Ocal_{\ast}|\geqslant\frac{\Exp(G/G')^{1/(2n+2)}}{4n^{n^3d(1+d+4n^3\log_2{n})}},
\]
and the automorphisms in $A$ induce pairwise distinct automorphisms from $\langle\beta\rangle$ on $G/G'$. We claim that there is an $\alpha\in A$ such that the automorphism $\tilde{\alpha}$ of $G/G'$ induced by $\alpha$ satisfies
\[
\ord(\tilde{\alpha})\geqslant|\Ocal_{\ast}|^{1/2}\geqslant\frac{\Exp(G/G')^{1/(4n+4)}}{2n^{\frac{1}{2}n^3d(1+d+4n^3\log_2{n})}}.
\]
Indeed, this is clear if $|\Ocal_{\ast}|=1$ (where we may choose $\alpha=\id_G$), so assume that $|\Ocal_{\ast}|>1$. If $\ord(\tilde{\alpha})<|\Ocal_{\ast}|^{1/2}$ for all $\alpha\in A$, then since the cyclic group $\langle\beta\rangle$ has at most $k$ elements of order $k$ for each $k\in\IN^+$, we find that
\[
|\Ocal_{\ast}|>1+2+\cdots+\lfloor|\Ocal_{\ast}|^{1/2}\rfloor\geqslant|A|=|\Ocal_{\ast}|,
\]
a contradiction. We now know that there is an automorphism $\alpha\in\Aut(G)$ with the following properties:
\begin{itemize}
\item $\alpha$ centralises $G'$.
\item The automorphism $\tilde{\alpha}$ of $G/G'$ induced by $\alpha$ is a power map automorphism of $G/G'$.
\item $\ord(\alpha)\geqslant\ord(\tilde{\alpha})\geqslant\frac{\Exp(G/G')^{1/(4n+4)}}{2n^{\frac{1}{2}n^3d(1+d+4n^3\log_2{n})}}$.
\end{itemize}
Note that $\alpha$ does not necessarily stabilise $G_{\omega}$. However, $\tilde{\alpha}$, being a power map automorphism of $G/G'$, stabilises the projection of $G_{\omega}$ to $G/G'$. Therefore, each image of $G_{\omega}$ under an iterate of $\alpha$ has a generating tuple of the form $(g_1c_1,\ldots,g_tc_t,z_1,\ldots,z_u)$ where
\begin{itemize}
\item $t$ is the minimum number of generators of the projection of $G_{\omega}$ to $G/G'$,
\item $(g_1,\ldots,g_t)$ is a fixed lift in $G$ of a standard generating tuple of the projection of $G_{\omega}$ to $G/G'$,
\item $(z_1,\ldots,z_u)$ is a fixed generating tuple of $G_{\omega}\cap G'$, and
\item $(c_1,\ldots,c_t)$ is a variable $t$-tuple of elements of $G'$.
\end{itemize}
It follows that the length $\ell$ of the orbit of $G_{\omega}$ under $\langle\alpha\rangle$ satisfies
\[
\ell\leqslant|G'|^t\leqslant|G'|^d\leqslant n^{2n^3d}.
\]
Set $\gamma:=\alpha^{\ell}$. Then $\gamma$ stabilises $G_{\omega}$, whence $\gamma\in\Aut_{\perm}(G)$, and
\[
n=|\Aut_{\perm}(G)|\geqslant\ord(\alpha^{\ell})\geqslant\frac{\ord(\alpha)}{\ell}\geqslant\frac{\Exp(G/G')^{1/(4n+4)}}{2n^{\frac{1}{2}n^3d(5+d+4n^3\log_2{n})}}.
\]
This implies that
\[
\Exp(G/G')\leqslant 16^{n+1}n^{2n^3d(5+d+4n^3\log_2{n})+4n+4},
\]
which is the desired upper bound on $\Exp(G/G')$.

We can now conclude the proof as follows: Note that $G/G'$ is abelian, and since $G$ is $d$-generated, so is $G/G'$. It follows that
\[
|G:G'|\leqslant\Exp(G/G')^d\leqslant 16^{(n+1)d}n^{2n^3d^2(5+d+4n^3\log_2{n})+4nd+4d},
\]
and thus, using Formula (\ref{gPrimeEq}),
\[
|G|=|G'|\cdot|G:G'|\leqslant n^{2n^3}\cdot 16^{(n+1)d}n^{2n^3d^2(5+d+4n^3\log_2{n})+4nd+4d}=\ffrak(d,n),
\]
as required.
\end{proof}

Now that we have verified Lemma \ref{ledNeuLem}, we are ready to prove Theorem \ref{mainTheo}(3).

\begin{proof}[Proof of Theorem \ref{mainTheo}(3)]
As in the proof of Theorem \ref{improvedBoundTheo}, we have $|\Aut_{\perm}(G)|\leqslant c^d$, and the result follows from Lemma \ref{ledNeuLem}, using the monotonicity of the function $\ffrak$ in its second variable.
\end{proof}

\section{Proof of Theorem \ref{mainTheo}(4)}\label{sec6}

The proof is by contradiction -- let $G\leqslant\Sym(\Omega)$ be an insoluble finite transitive permutation group, and assume that $\maol_{\perm}(G)\leqslant23$. In particular, the maximum conjugacy class length in $G$ is at most $23$. Therefore, the arguments from \cite[Section 6]{Bor20a} show that $\zeta G=\Rad(G)$, that $G'\cong\Alt(5)$ and that $G=\zeta G\times G'$. Fix a point $\omega\in\Omega$, and consider the point stabiliser $G_{\omega}\leqslant G$. Since $G_{\omega}$ is core-free in $G$, we have $G_{\omega}\cap\zeta G=\{1_G\}$, so that $G_{\omega}$ embeds into $G'\cong\Alt(5)$ via the canonical projection $G\rightarrow G/\zeta G$. Denote by $P$ the image of $G_{\omega}$ under the canonical projection $G\rightarrow G/G'$, which we may and will view as a subgroup of $\zeta G$. We now note three important facts:
\begin{enumerate}
\item $\C_{\Aut(\zeta G)}(P)$ is trivial. Assume otherwise. Note that $\C_{\Aut(\zeta G)}(P)\times\Inn(G')$ embeds naturally into $\Aut_{\perm}(G)$ via the injective group homomorphism
\begin{align*}
&\iota: (\alpha,\beta) \mapsto ((z,c) \mapsto (z^{\alpha},c^{\beta})) \\
&\text{ for all }\alpha\in\C_{\Aut(\zeta G)}(P),\beta\in\Inn(G'),z\in\zeta G,c\in G',
\end{align*}
which has the property that if $\beta$ is the conjugation by $c_0\in G'$, then $G_{\omega}^{\iota(\alpha,\beta)}=G_{\omega}^{c_0}$, a $G$-conjugate of $G_{\omega}$. Therefore, and since $G'\cong\Alt(5)$ has a conjugacy class of length $20$, it follows that $\maol_{\perm}(G)\geqslant 2\cdot 20=40>23$, a contradiction.
\item $|P|>1$. Otherwise, we have $G_{\omega}\leqslant G'\cong\Alt(5)$, and since there is exactly one conjugacy class of subgroups of $G'$ that are isomorphic to $G_{\omega}$, we find that $\Aut(G')$ embeds naturally into $\Aut_{\perm}(G)$ by Lemma \ref{autPermLem}, whence $\maol_{\perm}(G)\geqslant\maol(G')=\maol(\Alt(5))=24>23$, a contradiction.
\item $P$ is a quotient of $G_{\omega}/G_{\omega}'$. This is clear since $P$ is by definition an abelian quotient of $G_{\omega}$.
\end{enumerate}
We will now go through the finitely many possible (abstract group) isomorphism types of $G_{\omega}$ and reduce the proof of Theorem \ref{mainTheo}(4) to checking a finite list of possibilities for $(G,G_{\omega})$, which are listed in Table \ref{23table} below.
\begin{enumerate}
\item Case: $G_{\omega}\cong\Alt(5)$ or $|G_{\omega}|=1$. Then $G_{\omega}/G_{\omega}'$ is trivial, whence $P$ is trivial by fact (3) above. However, this contradicts fact (2).
\item Case: $G_{\omega}\cong\Alt(4)$ or $G_{\omega}\cong\IZ/3\IZ$. Then $G_{\omega}/G_{\omega}'\cong\IZ/3\IZ$, whence $P\cong\IZ/3\IZ$ by facts (2) and (3) above. Using fact (1) and Lemma \ref{centAbelianLem}, it follows that $\zeta G$ is cyclic of order $3$ or $6$. Hence, up to abstract group isomorphism, the pair $(G,G_{\omega})$ is one of the possibilities listed in rows 1--4 of Table \ref{23table}.
\item Case: $G_{\omega}\cong\IZ/5\IZ$. Then $G_{\omega}/G_{\omega}'\cong\IZ/5\IZ$, whence $P\cong\IZ/5\IZ$ by facts (2) and (3) above. Using fact (1) and Lemma \ref{centAbelianLem}, it follows that $\zeta G$ is cyclic of order $5$ or $10$. Hence, up to abstract group isomorphism, the pair $(G,G_{\omega})$ is one of the possibilities listed in rows 5 and 6 of Table \ref{23table}.
\item Case: $G_\omega$ is isomorphic to one of $\D_{10}$, $\Sym(3)$ or $\IZ/2\IZ$. Then $G_{\omega}/G_{\omega}'\cong\IZ/2\IZ$, whence $P\cong\IZ/2\IZ$ by facts (2) and (3) above. Using fact (1) and Lemma \ref{centAbelianLem}, it follows that $\zeta G=P\cong\IZ/2\IZ$. Hence, up to abstract group isomorphism, the pair $(G,G_{\omega})$ is one of the possibilities listed in rows 7--9 of Table \ref{23table}.
\item Case: $G_\omega\cong(\IZ/2\IZ)^2$. Then $G_{\omega}/G_{\omega}'\cong(\IZ/2\IZ)^2$, whence by facts (2) and (3) above, either $P\cong\IZ/2\IZ$ or $P\cong(\IZ/2\IZ)^2$. In either case, we have $\zeta G=P$ by fact (1) and Lemma \ref{centAbelianLem}. Therefore, up to abstract group isomorphism, the pair $(G,G_{\omega})$ is one of the possibilities listed in rows 10 and 11 of Table \ref{23table}.
\end{enumerate}
We now give Table \ref{23table}, which not only lists the $11$ remaining possibilities for $(G,G_{\omega})$ extracted from the above arguments, but also specifies the value of $\maol_{\perm}(G)$ in each case, which was computed using GAP \cite{GAP4}. Since $\maol_{\perm}(G)>23$ throughout, the proof of Theorem \ref{mainTheo}(4) is complete.

\begin{table}[h]
\begin{center}
\begin{tabular}{|c|c|c|c|}
\hline
row no. & $G$ & $G_{\omega}$ & $\maol_{\perm}(G)$ \\ \hline
1 & $\IZ/3\IZ\times\Alt(5)$ & $\langle(\overline{1},(1,2,3))\rangle$ & $48$ \\ \hline
2 & $\IZ/6\IZ\times\Alt(5)$ & $\langle(\overline{2},(1,2,3))\rangle$ & $48$ \\ \hline
3 & $\IZ/3\IZ\times\Alt(5)$ & $\langle(\overline{0},(1,2)(3,4)),(\overline{1},(1,2,3))\rangle$ & $40$ \\ \hline
4 & $\IZ/6\IZ\times\Alt(5)$ & $\langle(\overline{0},(1,2)(3,4)),(\overline{2},(1,2,3))\rangle$ & $40$ \\ \hline
5 & $\IZ/5\IZ\times\Alt(5)$ & $\langle(\overline{1},(1,2,3,4,5))\rangle$ & $80$ \\ \hline
6 & $\IZ/10\IZ\times\Alt(5)$ & $\langle(\overline{2},(1,2,3,4,5))\rangle$ & $80$ \\ \hline
7 & $\IZ/2\IZ\times\Alt(5)$ & $\langle(\overline{1},(1,2)(3,4))\rangle$ & $24$ \\ \hline
8 & $\IZ/2\IZ\times\Alt(5)$ & $\langle(\overline{0},(1,2,3)),(\overline{1},(2,3)(4,5))\rangle$ & $24$ \\ \hline
9 & $\IZ/2\IZ\times\Alt(5)$ & $\langle(\overline{0},(1,2,3,4,5)),(\overline{1},(2,5)(3,4))\rangle$ & $24$ \\ \hline
10 & $\IZ/2\IZ\times\Alt(5)$ & $\langle(\overline{0},(1,2)(3,4)),(\overline{1},(1,3)(2,4))\rangle$ & $24$ \\ \hline
11 & $(\IZ/2\IZ)^2\times\Alt(5)$ & $\langle((\overline{1},\overline{0}),(1,2)(3,4)),((\overline{0},\overline{1}),(1,3)(2,4))\rangle$ & $72$ \\ \hline
\end{tabular}
\end{center}
\caption{The remaining possibilities for $(G,G_{\omega})$}
\label{23table}
\end{table}

\section{Concluding remarks}\label{sec7}

We conclude this paper with some related open questions for further research. Firstly, as mentioned in Section \ref{sec5}, our Lemma \ref{ledNeuLem} is a partial generalisation (from finite abstract groups to finite transitive permutation groups) of a celebrated theorem of Ledermann and Neumann, \cite[Theorem 6.6]{LN56a}. The following question asks whether Ledermann and Neumann's theorem can be extended to transitive permutation groups in its full strength:

\begin{question}\label{ledNeuQues}
Is there an (explicit) function $f:\IN^+\rightarrow\IN^+$ such that for every finite transitive permutation group $G$, one has $|G|\leqslant f(|\Aut_{\perm}(G)|)$?
\end{question}

Another interesting question is whether Robinson and Wiegold's structural characterisation \cite[Theorem 1]{RW84a} can be extended to transitive permutation groups:

\begin{question}\label{robWieQues}
Let $G$ be a (not necessarily finite) transitive permutation group. Is it true that the following are equivalent?
\begin{enumerate}
\item The supremum of the $\Aut_{\perm}(G)$-orbit sizes on $G$ is finite.
\item The torsion subgroup $T$ of $\zeta G$ is finite and $\Aut_{\perm}(G)$ induces a finite group of automorphisms in $G/T$.
\end{enumerate}
\end{question}

It would also be interesting to investigate to what extent Theorem \ref{mainTheo} can be generalised to arbitrary (not necessarily transitive) permutation groups $G$ of finite degree. For example, even under the assumption that $\maol_{\perm}(G)=1$, there are infinitely many such $G$ up to permutation group isomorphism (any finite-degree permutation group $G$ of order $2$ is an example), but the following is still an interesting question:

\begin{question}\label{intransQues}
Can the finite-degree permutation groups $G$ with $\maol_{\perm}(G)\leqslant 3$ be classified, and are they of bounded order?
\end{question}

We finish with the following question about extending the results of \cite{Bor20a} to another natural setting.

\begin{question}\label{GLQues}
Let $G\leqslant \GL(d,q)$ and let $\Aut_{\mathrm{linear}}(G)$ be the subgroup of $\Aut(G)$ induced by the conjugation action of $\N_{\GL(d,q)}(G)$ on $G$. Is it possible to classify all groups $G$ for which all orbits of $\Aut_{\mathrm{linear}}(G)$ have length at most three?
\end{question}

\end{document}